\newcommand{\tenoreleven}[2]{#2} 
\title{Symplectic stability on manifolds with cylindrical ends}
\author{Sean Curry \; \'Alvaro Pelayo \; Xiudi Tang}
\begin{document}

\begin{abstract}

A famous result of J\"urgen Moser states that a symplectic form on a compact manifold cannot be deformed within its cohomology class to an inequivalent symplectic form. It is well known that this does not hold in general for noncompact symplectic manifolds. The notion of Eliashberg-Gromov convex ends provides a natural restricted setting for the study of analogs of Moser's symplectic stability result in the noncompact case, and this has been significantly developed in work of Cieliebak-Eliashberg. Retaining the end structure on the underlying smooth manifold, but dropping the convexity and completeness assumptions on the symplectic forms at infinity we show that symplectic stability holds under a natural growth condition on the path of symplectic forms. The result can be straightforwardly applied as we show through explicit examples.

\end{abstract}
\subjclass[2010]{Primary 53D05; Secondary 53D35, 57R52, 58A14 }
\maketitle
\section{Introduction} \label{sec:introduction}

A fundamental problem in symplectic topology is that of determining when two symplectic forms are equivalent.
Recall the symplectic stability result of Moser~\cite{Mo1965} (1965) saying that if $\omega_t$, $t \in [0, 1]$, is a smooth path of cohomologous symplectic forms on a smooth manifold $M$ (i.e. an \emph{isotopy}), and $M$ is compact, then there exists a smooth path $\varphi_t$ of diffeomorphisms of $M$ such that $\varphi_t^* \omega_t = \omega_0$ (i.e. $\omega_t$, $t \in [0, 1]$, is a \emph{strong isotopy}). Moser's argument depends strongly on the assumption that $M$ is compact. The result does not generalize straightforwardly to the noncompact case. 
On the one hand, in his work on the h-principle Gromov showed that two cohomologous symplectic forms $\omega_0$ and $\omega_1$ on a noncompact manifold may be joined by an isotopy if and only if they are connected by a path of nodegenerate forms \cite{Gr69} (1969).
On $\mathbb{R}^{2n}$ the h-principle says that any two symplectic forms inducing the same orientation are isotopic.
On the other hand, in his paper on pseudoholomorphic curves \cite{Gr85} (1985) Gromov proved the existence of exotic symplectic structures on $\mathbb{R}^{2n}$, $n\geq 2$ (not symplectomorphic to the standard structure, though having the same orientation). See also \cite{BaPe, Mu}. 
In order to give a natural setting within which one may attempt to generalize stability and other results from compact to noncompact symplectic manifolds Eliashberg and Gromov \cite{ElGr} (1991) formalized the
notion of symplectic manifolds with convex ends, which has become a fundamental concept in symplectic topology. In particular
it led to important work of Cieliebak and Eliashberg, e.g., 
in their book on Stein and Weinstein manifolds~\cite{CiEl2012} where stability results are established for special classes of symplectic manifolds with convex ends, namely for Liouville manifolds and Weinstein manifolds.

Our goal is to drop the assumption that the symplectic forms be convex on the ends, keeping only the assumption that the underlying manifold has an end structure, i.e. can be viewed as the interior of a manifold with boundary. In order to do so, one must impose a growth condition on the path of symplectic forms, for which a metric is required. 
Recall that a Riemannian manifold $(M,g)$ \emph{has cylindrical ends} if there exists a compact codimension $0$ submanifold $K$ whose boundary $\partial K$ is a smooth hypersurface, and an isometry $M\setminus K \to \partial K \times (1,\infty)$ where $\partial K$ has the induced metric. The second 
component of the isometry may be smoothly extended to a function $M \to \mathbb{R}_+$ with values less than $1$ on $K^{\circ}$, referred to as the \emph{radial coordinate function} of $(M,g)$. The reciprocal of the radial coordinate is a defining function for the \emph{boundary at infinity} $\partial M$, diffeomorphic to $\partial K$. 
Let $\Abs{\cdot}_r$ denote the uniform norm with respect to the metric over the points with radial coordinate $r$. Let $S_a(M)$ be the set of symplectic forms on $M$ with cohomology class $a \in \Hml^2(M,\mathbb{R})$.
We define the \emph{log-variation} $\LogVar \colon S_a(M) \times \der\Omega^1(M) \to [0, \infty]$ by $$\LogVar(\omega, \beta) = \sup_{r \geq 1} r^{-1} \Abs{\omega^{-1}}_r \Abs{\vphantom{\omega^{-1}}\beta}_r.$$
Our main result gives a sufficient condition for symplectic stability on these manifolds.

\begin{maintheorem} \label{thm:moser-cylindrical-end}
   Let $M$ be a manifold with cylindrical ends and $\Hml^1(\partial M,\mathbb{R}) = 0$. If $\omega_t$, $t \in [0, 1]$, is a symplectic isotopy with total log-variation $$\int_0^1 \LogVar(\omega_t,\dot{\omega}_t)\, \der t < \infty$$ then it is a strong isotopy.
\end{maintheorem}

The condition in the theorem is not necessary, see \cref{bad-example}. It is natural, however, in the sense that it amounts to a natural growth condition on the size of the vector field $X_t$ constructed via a generalization of Moser's Path Method to the noncompact case (\cref{sec:noncompact-path-method}). The difficulty with establishing a necessary condition in terms of the growth of the family $\omega_t$ is that $X_t$ may grow rapidly at infinity, yet still be complete.

\begin{corollary} \label{thm:main-corollary}
  Let $M$ be a manifold with cylindrical ends and $\Hml^1(\partial M,\mathbb{R}) = 0$. Then a symplectic isotopy $\omega_t$, $t \in [0,1]$, is a strong isotopy if there exists $C > 0$ such that $ \Abs{\omega_t^{-1}}_r \Abs{\vphantom{\omega_t^{-1}}\dot{\omega}_t}_r \leq C r$  for $r\gg 0, t\in [0,1]$.
\end{corollary}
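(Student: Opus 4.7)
The plan is to deduce \cref{thm:main-corollary} directly from \cref{thm:moser-cylindrical-end} by verifying that the pointwise growth hypothesis forces the total log-variation to be finite. In fact I would aim to prove the stronger statement that $\LogVar(\omega_t, \dot{\omega}_t)$ is uniformly bounded in $t \in [0, 1]$, which trivially implies the integrability required.

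First, I would fix $R \geq 1$ large enough that the bound $\Abs{\omega_t^{-1}}_r \Abs{\vphantom{\omega_t^{-1}}\dot{\omega}_t}_r \leq Cr$ holds for all $r \geq R$ and all $t \in [0, 1]$, and split the supremum defining $\LogVar$ at $r = R$. On the tail $r \geq R$ the assumption immediately gives $r^{-1} \Abs{\omega_t^{-1}}_r \Abs{\vphantom{\omega_t^{-1}}\dot{\omega}_t}_r \leq C$, uniformly in both $r$ and $t$. For the collar $1 \leq r \leq R$, I would use the cylindrical identification $M \setminus K \cong \partial K \times (1, \infty)$, under which this region corresponds to the compact set $\partial K \times [1, R]$; since $\omega_t$ is a symplectic isotopy, $\omega_t^{-1}$ and $\dot{\omega}_t$ depend continuously on $(x, t) \in \partial K \times [1, R] \times [0, 1]$, so their pointwise norms are bounded on this compact set by some constant $C'$, giving a uniform collar bound $r^{-1} \Abs{\omega_t^{-1}}_r \Abs{\vphantom{\omega_t^{-1}}\dot{\omega}_t}_r \leq C'$.

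Combining the two regimes yields $\LogVar(\omega_t, \dot{\omega}_t) \leq \max(C, C')$ for all $t \in [0, 1]$, so $\int_0^1 \LogVar(\omega_t, \dot{\omega}_t)\, \der t$ is finite and \cref{thm:moser-cylindrical-end} applies to conclude that $\omega_t$ is a strong isotopy. The only point requiring any thought is the uniformity in $t$ of the collar bound, which follows from smoothness of the isotopy together with compactness of $[0, 1]$; otherwise this is essentially bookkeeping, since the growth hypothesis has been tailored to match the definition of $\LogVar$.
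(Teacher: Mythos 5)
Your proposal is correct and is exactly the deduction the paper has in mind: the authors state that \cref{thm:main-corollary} ``follows immediately'' from the Main Theorem, and your argument---bounding the tail $r \geq R$ by the hypothesis and the compact collar $1 \leq r \leq R$ by joint continuity of $(x,t) \mapsto \abs{\omega_t^{-1}(x)}_g\,\abs{\dot{\omega}_t(x)}_g$ on $\partial K \times [1,R] \times [0,1]$---is precisely the bookkeeping that makes the deduction immediate. Your uniform bound $\LogVar(\omega_t, \dot{\omega}_t) \leq \max(C, C')$ gives finite total log-variation, so the Main Theorem applies as you say.
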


\begin{corollary} \label{cor:symplectic-metric}
  Let $M$ be a manifold with cylindrical ends and $\Hml^1(\partial M,\mathbb{R}) = 0$, and fix $a \in \Hml^{2}(M,\mathbb{R})$. Then $S_a(M) \times S_a(M) \to [0,\infty]$ given by
$(\alpha,\beta) \mapsto \inf \big(\int_0^1 \LogVar(\omega_t,  \dot{\omega}_t)\der t\big)$,
where the infimum is taken over all isotopies from $\alpha$ to $\beta$, is a pseudometric.
Moreover, forms at finite distance are strongly isotopic.
\end{corollary}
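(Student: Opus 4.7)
The plan is to verify the three pseudometric axioms (reflexivity $d(\alpha,\alpha)=0$, symmetry, and the triangle inequality, with $\infty$ an allowed value, nonnegativity being built into the definition of $\LogVar$), and then to deduce the final assertion directly from \cref{thm:moser-cylindrical-end}. Reflexivity is witnessed by the constant isotopy $\omega_t \equiv \alpha$, for which $\dot\omega_t = 0$ forces the integrand to vanish identically. For symmetry, given any isotopy $\omega_t$ from $\alpha$ to $\beta$, the reversed path $\tilde\omega_s := \omega_{1-s}$ joins $\beta$ to $\alpha$; since $\Abs{\dot{\tilde\omega}_s}_r = \Abs{\dot\omega_{1-s}}_r$ and $\tilde\omega_s^{-1} = \omega_{1-s}^{-1}$, the substitution $s = 1-t$ identifies the two total log-variations, and taking infima yields $d(\alpha,\beta) = d(\beta,\alpha)$.

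The key observation for the triangle inequality is reparametrization invariance: for any smooth nondecreasing surjection $\phi\colon [0,1] \to [0,1]$, the reparametrized isotopy $\omega_{\phi(s)}$ has derivative $\phi'(s)\dot\omega_{\phi(s)}$, and the resulting factor of $\phi'(s)$ appearing in $\LogVar$ is exactly cancelled by the Jacobian under the substitution $t = \phi(s)$. Consequently, any isotopy can be smoothly reparametrized to be locally constant near each endpoint without altering its total log-variation. To prove the triangle inequality one may assume the right-hand side is finite; then for $\epsilon > 0$ pick isotopies $\omega_t$ from $\alpha$ to $\beta$ and $\eta_t$ from $\beta$ to $\gamma$ whose total log-variations are within $\epsilon/2$ of $d(\alpha,\beta)$ and $d(\beta,\gamma)$ respectively. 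After reparametrizing each to be constant near the endpoints and concatenating, one obtains a smooth symplectic isotopy from $\alpha$ to $\gamma$ whose total log-variation is the sum of the two (the integrand vanishes on the constant pieces), giving $d(\alpha,\gamma) \leq d(\alpha,\beta) + d(\beta,\gamma) + \epsilon$. Letting $\epsilon \to 0$ concludes the argument.

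For the final assertion, if $d(\alpha,\beta) < \infty$ then by definition of the infimum there exists at least one isotopy from $\alpha$ to $\beta$ with finite total log-variation, so \cref{thm:moser-cylindrical-end} applies directly and produces the required strong isotopy. No step in this argument presents a genuine obstacle: the only non-trivial ingredient is the reparametrization invariance of the total log-variation, after which the standard smooth-concatenation technique goes through without modification.
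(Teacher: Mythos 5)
Your proposal is correct and takes essentially the same approach as the paper, which offers no written proof beyond the remark that the corollary follows immediately from the Main Theorem. Your verification of the pseudometric axioms (constant path, path reversal, and reparametrized concatenation using the homogeneity of $\LogVar$ in its second argument) is the standard filling-in of that remark, and the final assertion is, as you say, a direct application of the Main Theorem to any isotopy realizing a finite total log-variation.
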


Corollaries \ref{thm:main-corollary} and \ref{cor:symplectic-metric}  follow immediately from the Main Theorem.  

\begin{corollary} \label{cor:moser-linear-family}
  Let $M$ be a manifold with cylindrical ends and radial coordinate function $\textbf{\emph{r}}$, with $\Hml^1(\partial M, \mathbb{R}) = 0$.
  Let $\omega$ be a symplectic form and $\sigma$ a $1$-form on $M$.
  Suppose that $\sup_{r \in \textbf{\emph{r}}(M)} \Abs{\omega^{-1}}_r \Abs{\vphantom{\omega^{-1}} \der \sigma}_r < 1$.
  Then $\omega + t \der \sigma$, $t \in [0, 1]$, is a strong isotopy of symplectic forms.
\end{corollary}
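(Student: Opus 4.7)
The plan is to verify the hypotheses of the Main Theorem for the affine path $\omega_t = \omega + t\,\der\sigma$, $t \in [0,1]$. Each $\omega_t$ is closed (since $\omega$ is closed and $\der^2\sigma = 0$), and $\omega_t - \omega_0 = t\,\der\sigma$ is exact, so the whole path stays in the cohomology class of $\omega$. Two things remain to check: (i) each $\omega_t$ is in fact nondegenerate, and (ii) the total log-variation $\int_0^1 \LogVar(\omega_t,\der\sigma)\,\der t$ is finite. Both will come from a single pointwise Neumann-series estimate driven by the hypothesis $C := \sup_{r \in \textbf{\emph{r}}(M)} \Abs{\omega^{-1}}_r \Abs{\der\sigma}_r < 1$.

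At each point with radial coordinate $r$, I view $\omega^{-1}$ as a linear map $T^*_xM \to T_xM$ and $\der\sigma$ as $T_xM \to T^*_xM$, so that the composition $A := \omega^{-1}\,\der\sigma$ is an endomorphism of $T_xM$ whose operator norm is at most $\Abs{\omega^{-1}}_r \Abs{\der\sigma}_r \leq C$. Writing $\omega_t(u,v) = \omega\bigl((I + tA)u, v\bigr)$ and noting $t\Abs{A} \leq C < 1$, the Neumann series shows that $I + tA$ is invertible with $\Abs{(I+tA)^{-1}} \leq (1 - tC)^{-1}$. Hence $\omega_t$ is nondegenerate at every point, and $\omega_t^{-1} = (I+tA)^{-1}\omega^{-1}$ satisfies the pointwise bound $\Abs{\omega_t^{-1}} \leq (1-tC)^{-1}\Abs{\omega^{-1}}$.

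Multiplying by $\Abs{\der\sigma}$, taking the sup over the points of radial coordinate $r$, and dividing by $r \geq 1$ yields $\LogVar(\omega_t,\der\sigma) \leq C/(1-tC)$, so
\begin{equation*}
\int_0^1 \LogVar(\omega_t,\der\sigma)\,\der t \;\leq\; \int_0^1 \frac{C}{1-tC}\,\der t \;=\; -\log(1-C) \;<\; \infty.
\end{equation*}
The Main Theorem then delivers a strong isotopy, proving the corollary. The only real content in the argument is the Neumann estimate, which uses $C < 1$ to control both the nondegeneracy of $\omega_t$ and the norm of its inverse simultaneously; no other step presents a genuine obstacle.
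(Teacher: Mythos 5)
Your proof is correct and follows essentially the same route as the paper: both treat $\omega^{-1}\der\sigma$ as a pointwise endomorphism of operator norm at most the hypothesis constant, invert $I + t\,\omega^{-1}\der\sigma$ (the paper via direct invertibility, you via the Neumann series) to get nondegeneracy together with the bound $\abs{\omega_t^{-1}} \leq (1-tC)^{-1}\abs{\omega^{-1}}$, and then bound the total log-variation to invoke the Main Theorem. Your only deviation is cosmetic---you evaluate the integral exactly as $-\log(1-C)$ where the paper uses the cruder bound $C/(1-C)$---so there is nothing to change.
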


\begin{corollary} \label{cor:moser-euclidean}
  A symplectic isotopy $\omega_t$, $t \in [0,1]$, on $\mathbb{R}^{2n}$, $2n\geq 4$, is a strong isotopy if
  there exists $C>0$ such that $\Abs{\omega_t^{-1}}_r \Abs{\vphantom{\omega_t^{-1}}\dot{\omega}_t}_r \leq C \log r$
for $r\gg1, t \in [0,1]$,  where $\|\cdot\|_r$ is the uniform 
Euclidean norm over the sphere of radius $r$.
\end{corollary}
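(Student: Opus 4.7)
The plan is to reduce to the Main Theorem by equipping $\mathbb{R}^{2n}$ with a cylindrical-end metric conformally related to the Euclidean one. In polar form $g_{\mathrm{E}} = \der r^2 + r^2 g_{S^{2n-1}}$, so under the substitution $s = \log r$ we get $g_{\mathrm{E}} = e^{2s}(\der s^2 + g_{S^{2n-1}})$. Therefore $\tilde g := r^{-2} g_{\mathrm{E}}$ coincides with the standard product cylinder metric $\der s^2 + g_{S^{2n-1}}$ on $\{r > 0\}$. Taking $K = \{r \leq e\}$ and extending $\tilde g$ smoothly but arbitrarily across $K$ endows $(\mathbb{R}^{2n}, \tilde g)$ with the structure of a manifold with cylindrical ends, radial coordinate $s$, and boundary at infinity $S^{2n-1}$. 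Since $2n \geq 4$, $\Hml^1(S^{2n-1}, \mathbb{R}) = 0$, so the topological hypothesis of the Main Theorem is satisfied.

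The key observation is the conformal invariance of the product $\Abs{\omega^{-1}}\Abs{\dot\omega}$. Under $\tilde g = r^{-2} g_{\mathrm{E}}$, cotangent norms scale by $r$, so a $2$-form satisfies $\Abs{\alpha}_{\tilde g} = r^2 \Abs{\alpha}_{g_{\mathrm{E}}}$, while the inverse bivector $\omega^{-1}$ scales oppositely: $\Abs{\omega^{-1}}_{\tilde g} = r^{-2}\Abs{\omega^{-1}}_{g_{\mathrm{E}}}$. The two powers of $r^{\pm 2}$ cancel, giving
\[ \Abs{\omega_t^{-1}}_{\tilde g}\,\Abs{\dot\omega_t}_{\tilde g} = \Abs{\omega_t^{-1}}_{g_{\mathrm{E}}}\,\Abs{\dot\omega_t}_{g_{\mathrm{E}}}. \]

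Now I translate the hypothesis to the new radial coordinate: at $\tilde g$-radius $s$, the Euclidean radius is $r = e^s$, so for all sufficiently large $s$ the assumption yields $\Abs{\omega_t^{-1}}_{\tilde g,s}\Abs{\dot\omega_t}_{\tilde g,s} \leq C \log r = C s$. On the remaining bounded range of $s$, smoothness of $\omega_t$ and $\dot\omega_t$ together with compactness of $t \in [0,1]$ furnishes a uniform bound on $s^{-1}\Abs{\omega_t^{-1}}_{\tilde g,s}\Abs{\dot\omega_t}_{\tilde g,s}$. Hence $\LogVar(\omega_t, \dot\omega_t)$ is bounded uniformly in $t$, so $\int_0^1 \LogVar(\omega_t, \dot\omega_t)\, \der t < \infty$, and the Main Theorem delivers the strong isotopy.

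The only delicate point is the conformal invariance of $\Abs{\omega^{-1}}\Abs{\dot\omega}$; the matched opposite scalings of a $2$-form and its inverse bivector are precisely what force the substitution $s = \log r$, and thereby explain the appearance of $\log r$ in the corollary's hypothesis.
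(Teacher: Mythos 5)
Your proposal is correct and is exactly the argument the paper intends: its entire proof of \cref{cor:moser-euclidean} is the remark that ``away from the origin, Euclidean space is conformal to a cylinder,'' and you have filled in precisely that reduction --- the substitution $s = \log r$, the product-cylinder metric $\tilde g = r^{-2}g_{\mathrm{E}}$, and the cancellation of the conformal factors between the $2$-form and its inverse bivector (which is legitimate even for the sup-norms $\Abs{\cdot}_r$, since the conformal factor is constant on each sphere), turning the hypothesis $\Abs{\omega_t^{-1}}_r\Abs{\dot\omega_t}_r \leq C\log r$ into finiteness of the total log-variation required by the Main Theorem. No gaps; the handling of the compact range of $s$ and the verification $\Hml^1(S^{2n-1},\mathbb{R})=0$ are exactly what the paper's one-line proof implicitly relies on.
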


\begin{corollary} \label{cor:symplecic-compact-punctured}
  Let $M$ be an even dimensional compact manifold, $\mathrm{dim}M\geq 4$, and let $F$ be a finite set of points on $M$.
  If $\omega_t$, $t \in [0,1]$, is a symplectic isotopy on $M \setminus F$ for which $\omega_t^{-1}$ and $\dot{\omega}_t$ are bounded uniformly in $t$ with respect to any fixed metric on $M$, then $\omega_t$ is a strong isotopy on $M\setminus F$.
\end{corollary}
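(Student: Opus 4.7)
The plan is to deduce the corollary from the Main Theorem by endowing $M\setminus F$ with a cylindrical-end Riemannian structure in which the log-variation hypothesis falls out of a conformal cancellation.

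First I would construct the cylindrical-end metric. Fix any smooth metric $g_M$ on $M$. In geodesic normal coordinates about each puncture $p\in F$, introduce the cylindrical coordinate $r=-\log s$, where $s$ denotes the Euclidean distance to $p$. The flat model $ds^2+s^2\,g_{S^{2n-1}}$ then reads $e^{-2r}(dr^2+g_{S^{2n-1}})$, so replacing $g_M$ on a small punctured neighborhood of each $p$ by the product metric $dr^2+g_{S^{2n-1}}$ and smoothly interpolating on an annular collar produces a metric $g_{\mathrm{cyl}}$ on $M\setminus F$ with cylindrical ends whose boundary at infinity is $\bigsqcup_{p\in F} S^{2n-1}$. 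Since $\dim M\geq 4$, each boundary sphere has dimension $\geq 3$, so $\Hml^1(\partial(M\setminus F),\mathbb{R})=0$ as required by the Main Theorem.

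Next I would exploit a conformal symmetry to verify the growth condition. On each end $g_{\mathrm{cyl}}$ is conformal to $g_M$ with factor $e^{2\phi}$ (here $\phi=r$). A short coordinate calculation shows that under any conformal change $\tilde g=e^{2\phi}g$ one has $\Abs{\alpha}_{\tilde g}=e^{-2\phi}\Abs{\alpha}_g$ for every $2$-form $\alpha$ and $\Abs{\pi}_{\tilde g}=e^{2\phi}\Abs{\pi}_g$ for every bivector $\pi$, so the product $\Abs{\omega_t^{-1}}\,\Abs{\dot\omega_t}$ is \emph{conformally invariant}. Because the compact part of $M\setminus F$ is covered by a region on which $g_{\mathrm{cyl}}$ and $g_M$ are mutually comparable, the uniform-in-$t$ bounds on $\Abs{\omega_t^{-1}}_{g_M}$ and $\Abs{\dot\omega_t}_{g_M}$ upgrade to a bound $\Abs{\omega_t^{-1}}_{g_{\mathrm{cyl}}}\Abs{\dot\omega_t}_{g_{\mathrm{cyl}}}\leq C$ on all of $M\setminus F$ for every $t\in[0,1]$. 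Hence $\LogVar(\omega_t,\dot\omega_t)\leq C$ uniformly in $t$, so $\int_0^1\LogVar(\omega_t,\dot\omega_t)\,\der t<\infty$, and the Main Theorem delivers the desired strong isotopy on $M\setminus F$.

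The only piece of real content is the conformal cancellation between the norms of the bivector $\omega_t^{-1}$ and the $2$-form $\dot\omega_t$, which is a short algebraic check; I do not anticipate a serious obstacle, the remaining steps being a standard smoothing of the metric near each puncture and a trivial topological computation of the boundary cohomology.
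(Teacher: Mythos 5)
Your proof is correct, but it takes a genuinely different route from the paper's. The paper deliberately avoids arguing directly from the Main Theorem: it first notes that \cref{cor:moser-euclidean} generalizes trivially to metrics that are Euclidean on the end(s), then transplants each punctured geodesic ball to the exterior domain $\mathbb{R}^{2n}\setminus\overline{B^{2n}}$ via the inversion $x\mapsto x/\abs{x}^2$ and computes the differential in coordinates, finding $\phi_*\dot{\omega}_t = O(r^{-4})$ and $\phi_*\omega_t^{-1} = O(r^{4})$ uniformly in $t$, so the product is bounded and the generalized Euclidean corollary applies. You instead build the cylindrical metric directly via $r=-\log s$ and observe the abstract conformal scaling $\abs{\alpha}_{e^{2\phi}g}=e^{-2\phi}\abs{\alpha}_g$ for $2$-forms and $\abs{\pi}_{e^{2\phi}g}=e^{2\phi}\abs{\pi}_g$ for bivectors, so that $\abs{\omega_t^{-1}}\,\abs{\dot{\omega}_t}$ is conformally invariant. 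This is the same cancellation underlying the paper's $O(r^4)\cdot O(r^{-4})$ computation --- indeed \cref{cor:moser-euclidean} is itself deduced from the Main Theorem by the conformal equivalence of punctured Euclidean space with a cylinder --- but your formulation reduces it to a one-line algebraic fact, avoids the explicit pushforward computation, and incidentally shows the $\log r$ slack in \cref{cor:moser-euclidean} is never needed here. What the paper's route buys is concreteness and reuse of an already-stated corollary; what yours buys is brevity and a conceptual explanation of why the bounds cancel.

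Two small points to tighten. First, on the end $g_{\mathrm{cyl}}$ is exactly conformal to the \emph{flat} normal-coordinate model $ds^2+s^2 g_{S^{2n-1}}$, not to $g_M$ itself; since $g_M$ differs from the flat model by $O(s^2)$ in normal coordinates, the two are uniformly comparable on a small ball about $p$, and this comparability must be inserted before the conformal cancellation. Second, $\LogVar$ is defined using the product of the two suprema $\Abs{\omega_t^{-1}}_r\Abs{\vphantom{\omega_t^{-1}}\dot{\omega}_t}_r$ over each level set, not the supremum of the pointwise product; your pointwise bound suffices precisely because the conformal factor $e^{2r}$ is constant on each level set of the radial coordinate, so $\Abs{\omega_t^{-1}}_r\leq C e^{2r}$ and $\Abs{\vphantom{\omega_t^{-1}}\dot{\omega}_t}_r\leq C e^{-2r}$ and the suprema still cancel. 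Both repairs are immediate and do not affect the validity of your argument.
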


The Main Theorem, and Corollaries \ref{cor:moser-linear-family} and \ref{cor:symplecic-compact-punctured} are proved in \cref{sec:moser-cylindrical-end}. \cref{cor:moser-euclidean} follows by noting that, away from the origin, Euclidean space is conformal to a cylinder.


\begin{remark}
The assumption $\Hml^1(\partial M,\mathbb{R}) = 0$ is equivalent to the natural map $\Hml^2_c(M,\mathbb{R}) \to \Hml^2(M,\mathbb{R})$ being injective. This allows one to handle the compact part of $M$ separately in constructing the generator $X_t$ of the strong isotopy via the Path Method (\cref{sec:noncompact-path-method}). More importantly, this assumption implies injectivity of the map $\Hml^2_\cspt(V,\mathbb{R})\to\Hml^2(V,\mathbb{R})$ for sets $V$ of the form $\textbf{\emph{r}}^{-1}(r-\epsilon,r+\epsilon)$, where $\textbf{\emph{r}}$ is the radial coordinate function of $M$. Without this assumption it is impossible to construct the time dependent vector field $X_t$ with bounds on $X_t$ which are localized in the radial coordinate on the ends. This makes the assumption natural, and apparently necessary for our kind of results. The assumption $\Hml^1(\partial M,\mathbb{R}) = 0$ also implies $\dim M > 2$. If $\dim M=2$ a symplectic isotopy is a strong isotopy if $\int_M\omega_0=\int_M\omega_1$ and the set of ends where $\omega_0$ and $\omega_1$ give infinite volume coincide up to permutation by a diffeomorphism \cite{GrSh1979,PeTa2016}. 
\end{remark}

\begin{remark}
The role of the condition $ \Abs{\omega_t^{-1}}_r \Abs{\vphantom{\omega_t^{-1}}\dot{\omega}_t}_r \leq C r$ in \cref{thm:main-corollary} is intuitive and natural: it prevents finite time blow up for the ordinary differential inequality of the form $\dot{r}(t)\leq\Abs{\omega_t^{-1}}_r \Abs{\vphantom{\omega_t^{-1}}\dot{\omega}_t}_r$. Heuristically, this inequality controls the escape to infinity of the integral curves for the time dependent vector field $X_t$, constructed by generalizing Moser's Path Method (\cref{sec:noncompact-path-method}), whose flow gives the strong isotopy. In practice, one only obtains an inequality of (roughly) this form for $r$ in a set of intervals with arbitrarily small gaps between them, which makes formalizing this heuristic argument awkward. Our approach, therefore, is to control the lengths of the integral curves more directly, leading to the result obtained in the Main Theorem. 
\end{remark}

\begin{remark}
In stating our Main Theorem and some of its corollaries we have made use of a Riemannian metric with cylindrical ends. This metric plays only an auxiliary role, allowing us to give the simplest formulation of our result. Metrics with different asymptotics can be used. This is demonstrated for the most basic case of the Euclidean metric in \cref{cor:moser-euclidean}. For concrete examples our conditions are also very easy to check. The following is a simple application of \cref{cor:moser-euclidean}:
If  $f_1,f_2$ are smooth functions bounded away from zero and with bounded time derivative and $c$ is any constant, then the isotopy of symplectic forms
$\omega_t= f_1(t,x_1,y_1)\der x_1\wedge \der y_1+f_2(t,x_2,y_2)\der x_2\wedge \der y_2 + c \der x_1\wedge \der x_2$, $t\in [0,1]$, on $\mathbb{R}^4$ is a strong isotopy. More generally, the time derivatives of $f_1$ and $f_2$ may have logarithmic growth in $r$, the radial coordinate on $\mathbb{R}^4$.
\end{remark}

\subsubsection*{Acknowledgments:} The last two authors are supported by NSF CAREER Grant DMS-1518420. We are very grateful to Roger Casals, Daniel Cristofaro-Gardiner, Yakov Eliashberg, Larry Guth, Rafe Mazzeo, Leonid Polterovich, Justin Roberts, Alan Weinstein, Paul Yang, and Shing-Tung Yau for helpful discussions about symplectic stability.

\section{Path Method on noncompact manifolds} \label{sec:noncompact-path-method}

For $M$ compact, Moser proved his symplectic stability result by
differentiating $\varphi_t^* \omega_t = \omega_0$ to get
$0 = \frac{\der}{\der t}(\varphi_t^* \omega_t) = \varphi^*_t \Pa{\dot{\omega}_t + \mathcal{L}_{X_t} \omega_t}$,
where $\dot{\omega}_t$ is the time derivative of $\omega_t$ and $X_t$ is the time-dependent vector field generating the family $\varphi_t$, and then solving for $\varphi_t$ in terms of $X_t$.
Since $[\omega_t]$ is constant, $\dot{\omega}_t$ is exact for all $t \in [0, 1]$.
By Hodge theory on compact manifolds there exists a smooth family $\sigma_t$ of $1$-forms such that $\dot{\omega}_t = \der \sigma_t$ for all $t \in [0, 1]$.
By Cartan's formula $\mathcal{L}_{X_t}\omega_t = \der(X_t \intprod \omega_t)$ since $\omega_t$ is closed for each $t \in [0, 1]$.
So $\dot{\omega}_t + \mathcal{L}_{X_t} \omega_t= \der(\sigma_t + X_t \intprod \omega_t)$.
If one chooses $X_t$ to be the vector field determined by $\sigma_t + X_t \intprod \omega_t = 0$ then, since $M$ is compact, we may integrate $X_t$ to determine a family $\varphi_t$ such that $\varphi_t^* \omega_t = \omega_0$ for all $t \in [0, 1]$. 
This technique is usually called the \emph{Path Method}. In the noncompact case, the argument above does not work, and the conclusion is false. 
The problem lies in being able to solve $\dot{\omega}_t = \der \sigma_t$ for a smooth family of $1$-forms $\sigma_t$ in such a way that $X_t$, $t\in [0,1]$, is complete.

The following is the outline of the steps we carry out to construct the vector field $X_t$ and provide the $\Leb^\infty$ estimates needed to determine the existence of the flow when $M$ is not compact: In the first step we consider a compact Riemannian manifold $(N, \gN)$ of dimension $m$ and an open interval $J$.
Combining Hodge theory on $(N, \gN)$ with the Poincar\'e Lemma one has, for any $k$ with $1 \leq k \leq m$, an operator $I^k_{N \times J} \colon \Omega^k(N \times J) \to \Omega^{k-1}(N \times J)$  satisfying $\der I^k_{N \times J} \omega = \omega$ for all $\omega \in \der \Omega^{k-1}(N \times J)$.
We bound the $\Leb^\infty$ norm of $I^k_{N \times J}$ by proving (for $m \geq 3$) that $I^k_N = \der^* \circ G \colon \Omega^k(N) \to \Omega^{k-1}(N)$ has finite $\Leb^\infty$ norm, where $G$ is the Green's operator for the Hodge Laplacian on $k$-forms and $\der^*$ is the codifferential.

In the second step we solve the $\der$-equation for compactly supported forms.
Let $M$ be a smooth manifold and let $V$ be an open submanifold of $M$ with compact closure and smooth boundary.
We use the weighted Hodge theory of Bueler-Prohorenkov \cite{BuPr2002} on noncompact manifolds to construct an operator $I^k_{M, V} \colon \Omega^k_\cspt(M, V) \to \Omega^{k-1}(M, V)$ on forms compactly supported in $V$ satisfying $\der \circ I^k_{M, V}\omega = \omega$ for all $\omega \in \der \Omega^{k-1}_\cspt(M, V)$.

In the final step, given an isotopy of symplectic forms $\omega_t$, $t \in [0, 1]$, we put the previous steps together to construct a time-dependent vector field $X_t$ satisfying $\der(X_t \intprod \omega_t) = -\dot{\omega}_t$ with explicit $\Leb^\infty$ estimates in terms of the $\Leb^\infty$ norms of $\dot{\omega_t}$, $\omega_t^{-1}$, and the operators $I^k_{N \times J}$ and $I^k_{M, V}$ for a collection of precompact pieces $U\cong N \times J$ and $V$ of the underlying manifold $M$.
To define these pieces we pick a proper smooth function $f$ and a covering of $f(M)$ by intervals whose preimages give the sets $U$ and $V$.
For intervals $J$ not containing any critical values of $f$ we identify $U = f^{-1}(J)$ with $N \times J$, where $N = f^{-1}(r_0)$ for some $r_0 \in J$, and define $\sigma_t = I^2_{N \times J} \dot{\omega}_t$, for which we have explicit $\Leb^\infty$ estimates from the first step.
We then smoothly extend $\sigma_t$ across the remaining gluing regions, corresponding to the remaining intervals $J'$, to solve $\der\sigma_t = \dot{\omega}_t$.
This requires using the operator $I^2_{M, V}$ from the second step with $V = f^{-1}(J')$. This gluing step is topologically obstructed, and we must assume that $\Hml^2_\cspt(V,\mathbb{R})\to\Hml^2(V,\mathbb{R})$ is injective (this is the reason for the condition $\Hml^1(\partial M,\mathbb{R})=0$ in our Main Theorem). We then let $X_t=-\omega_t^{-1}\sigma_t$. Since $\der(X_t \intprod \omega_t) = -\dot{\omega}_t$, the local flow of $\varphi_t$ of $X_t$ starting from $t_0 = 0$ satisfies $\frac{\der}{\der t}(\varphi_t^* \omega_t) = 0$, where this makes sense.
So the problem reduces to studying the global existence of the flow $\varphi_t$ for $t \in [0, 1]$.
This is done in \cref{sec:moser-cylindrical-end} using the precise estimates on $X_t$ which appear in \cref{lem:noncompact-path-method}.

\subsection*{Step 1: \texorpdfstring{$\Leb^\infty$}{L\^{}infinity} estimates for solving the \texorpdfstring{$\der$}{d}-equation} \label{ssec:poincare-lemma}

Let $N$ be a smooth manifold and $J$ an open interval.
The Poincar\'e Lemma for de Rham cohomology states that $\Hml^k(N \times J,\mathbb{R}) = \Hml^k(N,\mathbb{R})$ for any $k$.
This is proved by fixing any $r_0 \in J$ and constructing a de Rham homotopy operator for the pair of maps $\pi \colon N \times J \to N$, the projection, and $\iota \colon N \hookrightarrow N \times J$, the inclusion $y \mapsto (y, r_0)$.
An example of such a homotopy operator is the map $I^k_0 \colon \Omega^k(N \times J) \to \Omega^{k-1}(N \times J)$ given by $(I^k_0 \omega)(y, r) = \int_{r_0}^r \partial_s \intprod \omega(y, s) \der s$ for each $(y, r) \in N \times J$, where $\partial_s$ is the coordinate vector field along $J$.
A straightforward calculation shows that $\der I^k_0 \omega + I^k_0 \der \omega = \omega - \pi^*\iota^*\omega$ for any $\omega \in \Omega^k(N \times J)$, with $0 \leq k \leq \dim N+1$.
We will make use of the following trivial consequence.

\begin{lemma} \label{lem:primitive-cylinder}
  Let $N$ be a smooth manifold and $J$ an open interval.
  Let $k \in \Set{1, \ldots, \dim N}$ and let $I^k_N \colon \Omega^k(N) \to \Omega^{k-1}(N)$ be a smooth operator such that $\der I^k_N = \identity$ on $\der \Omega^{k-1}(N)$.
  Fix $r_0 \in J$ and let $\iota \colon N \hookrightarrow N \times J$ be the map $y \mapsto (y, r_0)$.
  Then the operator $I^k_{N \times J} \colon \Omega^k(N \times J) \to \Omega^{k-1}(N \times J)$ given by 
  \begin{equation*}\label{eq:I_U}
    (I^k_{N \times J} \omega)(y, r) = \int_{r_0}^r \partial_s \intprod \omega(y, s) \der s + (I^k_N \iota^* \omega)(y)
  \end{equation*}
  satisfies $\der I^k_{N \times J} \omega = \omega$ for all $\omega \in \der \Omega^{k-1}(N \times J)$.
\end{lemma}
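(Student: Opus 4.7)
The plan is to recognize the operator $I^k_{N \times J}$ as the sum $I^k_0 + \pi^* \circ I^k_N \circ \iota^*$, where $I^k_0$ is the standard Poincar\'e Lemma homotopy operator recalled in the paragraph just before the lemma statement, and where the second summand is interpreted as a $(k-1)$-form on $N \times J$ by pullback along $\pi$. Once this decomposition is in hand, the identity $\der I^k_{N \times J} \omega = \omega$ reduces to a short direct calculation combining the standard homotopy formula with the defining property of $I^k_N$.

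I would proceed in three steps. First, since $\omega \in \der \Omega^{k-1}(N \times J)$ is in particular closed, the standard homotopy identity $\der I^k_0 \omega + I^k_0 \der \omega = \omega - \pi^* \iota^* \omega$ collapses to
\[
\der I^k_0 \omega = \omega - \pi^* \iota^* \omega.
\]
Second, because $\iota^*$ commutes with $\der$, the pullback $\iota^* \omega$ is exact on $N$ and therefore lies in $\der \Omega^{k-1}(N)$; by the hypothesis on $I^k_N$ one then has $\der(I^k_N \iota^* \omega) = \iota^* \omega$, and applying $\pi^*$ (which also commutes with $\der$) yields $\der(\pi^* I^k_N \iota^* \omega) = \pi^* \iota^* \omega$. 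Third, adding the two displayed identities gives
\[
\der I^k_{N \times J} \omega = (\omega - \pi^* \iota^* \omega) + \pi^* \iota^* \omega = \omega,
\]
as desired.

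There is essentially no serious obstacle: the lemma is a routine packaging of the classical Poincar\'e homotopy together with the supplied right inverse $I^k_N$ on $N$. The reason for isolating the statement is not its depth but the explicit formula it provides for $I^k_{N \times J}$, which will matter later for the $\Leb^\infty$ bounds described in Step 1 of \cref{sec:noncompact-path-method}, and for the subsequent construction of the vector field $X_t$. Beyond the identity itself, one only needs to observe tacitly that the stated formula defines a smooth operator on forms, which is immediate from smoothness of $\omega$, of $\iota^*$, and of the given operator $I^k_N$.
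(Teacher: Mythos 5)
Your proof is correct and is exactly the argument the paper intends: the paper gives no separate proof, stating the lemma as a ``trivial consequence'' of the homotopy identity $\der I^k_0 \omega + I^k_0 \der \omega = \omega - \pi^*\iota^*\omega$, which is precisely what your decomposition $I^k_{N\times J} = I^k_0 + \pi^* \circ I^k_N \circ \iota^*$ together with closedness of $\omega$ and the hypothesis $\der I^k_N = \identity$ on $\der\Omega^{k-1}(N)$ makes explicit. No gaps; your tacit smoothness remark and the observation that $\iota^*\omega$ is exact (not merely closed) are the only points needing mention, and you handle both.
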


We will be applying \cref{lem:primitive-cylinder} in the case of a compact Riemannian manifold $(N, \gN)$.
In order to bound the $\Leb^\infty$ norm of $I^k_{N \times J}$ it suffices to prove that the natural Hodge theoretic operator $I^k_N$ has finite $\Leb^\infty$ norm.

\begin{theorem} \label{lem:primitive-compact-estimate}
  Let $(N, \gN)$ be a compact Riemannian manifold of dimension $m \geq 3$.
  Let $k \in \Set{1, \dotsc, m}$ and let $I^k_N = \der^* \circ G \colon \Omega^k(N) \to \Omega^{k-1}(N)$ where $G$ is the Green's operator for the Hodge Laplacian on $k$-forms, and $\der^*$ is the codifferential.
  Then $\der \circ I^k_N$ is the identity on $\der \Omega^{k-1}(N)$ and
  \begin{equation*}
    \Abs{I^k_N}_{\Leb^\infty} = \sup_{\omega \in \Omega^k(N)} \frac{\Abs{I^k_N \omega}_{\Leb^\infty(N, \gN)}}{\Abs{\omega}_{\Leb^\infty(N, \gN)}} < \infty.
  \end{equation*}
  Here $\Abs{\cdot}_{\Leb^\infty(N, \gN)}$ is the uniform norm with respect to $g_N$ over $N$.
\end{theorem}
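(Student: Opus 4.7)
The plan is to split the proof into two parts: establishing the identity $\der I^k_N = \identity$ on $\der \Omega^{k-1}(N)$, and then obtaining the $\Leb^\infty$ bound.

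For the first part I would invoke the Hodge decomposition $\Omega^k(N) = \mathcal{H}^k \oplus \der \Omega^{k-1}(N) \oplus \der^* \Omega^{k+1}(N)$ together with the defining property of the Green's operator, $\Delta G = G \Delta = \identity - H$, where $\Delta = \der \der^* + \der^* \der$ is the Hodge Laplacian on $k$-forms and $H$ is the $\Leb^2$-orthogonal projection onto harmonic $k$-forms. Using that $G$ commutes with both $\der$ and $\der^*$, that $H\omega = 0$ for $\omega \in \der\Omega^{k-1}(N)$ (by $\Leb^2$-orthogonality with $\mathcal{H}^k$), and that $\der\omega = 0$ for such $\omega$, a short computation
\begin{equation*}
  \der I^k_N \omega = \der \der^* G\omega = \Delta G\omega - \der^* \der G \omega = (\omega - H\omega) - \der^* G \der \omega = \omega
\end{equation*}
completes this step, uniformly in $k \in \Set{1,\dotsc,m}$ (including $k=m$, where the third summand in the decomposition is absent).

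For the $\Leb^\infty$ bound my plan is to appeal to the fact that on the compact Riemannian manifold $(N,\gN)$ the Green's operator for the Hodge Laplacian is a classical pseudodifferential operator of order $-2$. Its Schwartz kernel $\mathbb{G}(x,y)$, viewed as a double form, is smooth off the diagonal and, in the range $m \geq 3$, has leading pointwise singularity of order $\rho(x,y)^{2-m}$, where $\rho$ denotes the $\gN$-distance. Applying $\der^*$ in the $x$-variable raises the order of the singularity by one, so the integral kernel $K(x,y)$ of $I^k_N = \der^* G$ obeys a pointwise bound $|K(x,y)|_{\gN} \lesssim \rho(x,y)^{1-m}$ near the diagonal. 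Since the Riemannian volume near a point behaves like $r^{m-1}\,\der r$, the function $x \mapsto \int_N |K(x,y)|_{\gN}\,\der V_{\gN}(y)$ is continuous and, by compactness of $N$, bounded by some $C$. Pointwise majorization then yields
\begin{equation*}
  |I^k_N \omega(x)|_{\gN} \leq \int_N |K(x,y)|_{\gN}\,|\omega(y)|_{\gN}\,\der V_{\gN}(y) \leq C\,\Abs{\omega}_{\Leb^\infty(N,\gN)},
\end{equation*}
giving the required estimate $\Abs{I^k_N}_{\Leb^\infty} \leq C$.

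The main technical point I expect to work at is justifying the Schwartz kernel singularity for $\der^* G$ on forms. A cleaner, more analytic alternative avoiding explicit kernels is to use elliptic $\Leb^p$ theory: $G$ maps $\Leb^p(\Omega^k)$ continuously into $W^{2,p}(\Omega^k)$ for every $p \in (1,\infty)$, whence $\der^* G$ maps $\Leb^p$ into $W^{1,p}$; picking $p > m$ and combining the Sobolev embedding $W^{1,p} \hookrightarrow \Leb^\infty$ with the compact-manifold inclusion $\Leb^\infty(N) \hookrightarrow \Leb^p(N)$ yields the same bound without dealing with the kernel directly. The hypothesis $m \geq 3$ sits most naturally with the kernel approach, ensuring the fundamental solution of $\Delta$ has polynomial rather than logarithmic leading behavior near the diagonal.
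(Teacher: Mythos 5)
Your proposal is correct, and your primary route is conceptually the paper's own: both arguments come down to the fact that the Schwartz kernel of $I^k_N = \der^* \circ G$ has an integrable singularity $\mathcal{O}(\rho(p,q)^{1-m})$ near the diagonal, after which the $\Leb^\infty$ bound follows by integrating against $r^{m-1}\,\der r$ in geodesic polar coordinates and using smoothness of the kernel off the diagonal plus compactness of $N$. The difference is where the kernel estimate comes from. You outsource it to the classical pseudodifferential calculus ($G$ of order $-2$, so $\der^* G$ of order $-1$), whereas the paper constructs the Green's kernel by hand: it takes the truncated Euclidean parametrix $H(p,q) = \frac{\operatorname{dist}(p,q)^{2-m}}{(m-2)\sigma_{m-1}} f(\operatorname{dist}(p,q))$, iterates the error via Giraud's lemma until the remainder $\Gamma_{n+1}$ is $\Cont^1$ (taking $n > \frac{m}{2}$), and solves for a $\Cont^2$ correction term $F(p,q)$ by elliptic regularity for operators with scalar principal part, treating the kernel as a distributional section of a bundle over $N \times N$; this yields $G(p,q) = \frac{\operatorname{dist}(p,q)^{2-m}}{(m-2)\sigma_{m-1}}\Pa{1 + \mathcal{O}(\operatorname{dist}(p,q))}$ and hence the $\rho^{1-m}$ bound on $\der^*_p G$. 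So the ``main technical point'' you flagged --- justifying the kernel singularity of $\der^* G$ on forms --- is precisely what the body of the paper's proof consists of; your version is complete modulo accepting the pseudodifferential kernel expansion for bundle-valued operators as known. Your opening computation $\der \der^* G \omega = \Delta G \omega - \der^* G \der \omega = \omega - H\omega = \omega$ for exact $\omega$ is correct and supplies a detail the paper asserts without proof. Your $\Leb^p$ alternative is a genuinely different and sound argument: the a priori estimate plus invertibility of $\Delta$ off harmonics gives $G \colon \Leb^p \to W^{2,p}$, then $\der^*$ maps into $W^{1,p}$, Sobolev embedding with $p > m$ lands in $\Leb^\infty$, and finite volume gives $\Leb^\infty \hookrightarrow \Leb^p$; it avoids kernels entirely and, as you observe, does not even need $m \geq 3$ (and in fact even the kernel route survives $m = 2$, since the derivative of a logarithmic singularity is $\rho^{-1}$, still integrable against $r\,\der r$ --- the hypothesis $m \geq 3$ in the paper reflects that $\dim M \geq 4$ forces $\dim N \geq 3$ in the application). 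What the paper's explicit construction buys is self-containedness and the precise leading behavior of the kernel; what your alternatives buy is brevity and slightly greater generality.
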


\begin{proof}
  The Green's operator $G \colon \Omega^k(N) \to \Omega^k(N)$ is characterized by $\Delta G \omega = \omega$ for $\omega \in (\ker \Delta)^\perp$ and $G \omega = 0$ for $\omega \in \ker \Delta$, where $\Delta \colon \Omega^k(N) \to \Omega^k(N)$ is the Hodge Laplacian.
  It is possible to construct an integral kernel for $G$; the only difficulty is that the Green's kernel must be thought of as a distributional section of the bundle $\pi_1^* \Lambda^2 N \otimes \pi_2^* (\Lambda^2 N)^* \to N \times N$ where $\pi_1, \pi_2 \colon N \times N \to N$ are the projections onto the first and second factor respectively.
  We will show that the Green's kernel has the same asymptotic behavior at leading order near the diagonal as the Euclidean Green's function (cf. \cite{Au1998} for the case of functions).
  To construct the Green's kernel we solve $\Delta_{q, \; \text{distr.}} G(p, q) = \delta_p(q) - V^{-1}$  where $\Delta_{q, \; \text{distr.}}$ is the distributional Laplacian, $\delta_p(q)$ is the Dirac delta function at $p$, and $V$ is the volume of $(N, \gN)$.
  We start by formally approximating $G(p, q)$ near the diagonal.
  Let $f \in \Cont_0^\infty(\mathbb{R})$ be the standard bump function equal to $1$ on $(-\frac{\delta}{2}, \frac{\delta}{2})$ and supported in $(-\delta, \delta)$ where $\delta$ is the injectivity radius of $(N, \gN)$.
  Let $H(p, q) = \frac{\operatorname{dist}(p, q)^{2-m}}{(m-2) \sigma_{m-1}} f(\operatorname{dist}(p, q))$ where $\sigma_{m-1}$ is the volume of the $(m-1)$-sphere.
  Let $n$ be an integer larger than $\frac{m}{2}$.
  Let $\Gamma_1(p, q) = -\Delta_q H(p, q)$ and for $1 \leq i \leq n$ let $\Gamma_{i+1}(p, q) = -\int_N \Gamma_i(p, r) \Delta_q H(r, q) \operatorname{dvol}_q$.
  We write
  \begin{equation*} 
    G(p, q) = H(p, q) + \sum_{i =1}^n \int_N \Gamma_i(p, r) H(r, q) \operatorname{dvol}_q + F(p, q)
  \end{equation*}
  where $F(p, q)$ is a distributional section of $\pi_1^* \Lambda^2 N\otimes \pi_2^* (\Lambda^2 N)^* \to N \times N$, and seek to solve for $F(p,q)$.
  Taking the Laplacian of $G(p,q)$, using that $\Delta_{q, \; \text{distr.}} H(p, q) = \Delta_q H(p, q) + \delta_p(q)$ by Green's third identity (see for instance p. 107 in \cite{Au1998}), and canceling,
  \begin{equation} \label{eq:boundedRHS}
    V^{-1} = \Gamma_{n+1}(p, q) + \Delta_{q, \; \text{distr.}} F(p, q).
  \end{equation}
  By a standard Lemma of Giraud \cite[p. 150]{Gi1929} $\Gamma_n(p, q)$ is bounded, and consequently $\Gamma_{n+1}(p, q)$ is $\Cont^1$.
  By elliptic theory, for each fixed $p$ there is a weak solution $F(p, q)$ of \cref{eq:boundedRHS}.
  Then by elliptic regularity for elliptic operators between vector bundles whose principal part has scalar coefficients the solution $F(p, q)$ is $\Cont^2$.
  It follows from the definition of $H(p,q)$ and the ansatz for $G(p,q)$ above that $G(p, q) = \frac{\operatorname{dist}(p, q)^{2-m}}{(m-2) \sigma_{m-1}} (1 + \mathcal{O}(\operatorname{dist}(p, q)))$ near the diagonal.
  Thus $\big|\int_{B_\delta(p)} \der^*_p G(p, q) \omega(q) \operatorname{dvol}_q\big|$ is at most
  \begin{equation*}
    \abs{\int_{B_\delta(p)} \frac{r^{1-m}}{(m-2) \sigma_{m-1}} \Pa{1 + \mathcal{O}(r)} \, r^{m-1} \der r \operatorname{dvol}_{\mathbb{S}^{m-1}}} \Abs{\omega}_{\Leb^\infty(N, g)}
  \end{equation*}
  where $r = \operatorname{dist}(p, q)$.
  Since the derivative of $G(p, q)$ is bounded outside of the ball $B_\delta(p)$ and $N$ is compact there exists $C>0$ for which
  \begin{align*}
    \abs{(I^k_N \omega)(p)}_g 
    &= \abs{\int_N \der^*_p G(p, q) \omega(q) \operatorname{dvol}_q}_g \\
    &\leq \abs{\int_{B_\delta(p)} \der^*_p G(p, q) \omega(q) \operatorname{dvol}_q}_g + \abs{\int_{N \setminus B_\delta(p)} \der^*_p G(p, q) \omega(q) \operatorname{dvol}_q}_g \\ 
    &\leq C \Abs{\omega}_{\Leb^\infty(N, g)}
  \end{align*}
  for all $p \in N$.
\end{proof}

\subsection*{Step 2: Solving the \texorpdfstring{$\der$}{d}-equation for compactly supported forms} \label{ssec:compact-support}

\begin{lemma} \label{lem:primitive-form-compact-support}
  Let $M$ be a smooth manifold and let $V$ be an open submanifold of $M$ with smooth compact boundary. Let $\Omega^k(M, V)$ be the space of $k$-forms which vanish outside of $V$.
  For $k \in \Set{1, \ldots, \dim M}$ there exists a smooth operator $I^k_{M, V} \colon \Omega^k_\cspt(V) \to \Omega^{k-1}(M, V)$ such that $\Res{\Pa{\der \circ I^k_{M, V}\omega}}_V=\omega$ for all $\omega\in\der \Omega^{k-1}_\cspt(V)$.
\end{lemma}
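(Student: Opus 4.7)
The plan is to apply the weighted $\Leb^2$ Hodge theory of Bueler-Prohorenkov \cite{BuPr2002} on $V$, after equipping it with a complete Riemannian metric that sends $\partial V$ to infinity together with a weight growing rapidly there, in order to produce a canonical primitive of $\omega$ that decays to infinite order at $\partial V$ and therefore extends smoothly by zero across $\partial V$.

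First, pick a collar $\partial V \times (0, 1) \hookrightarrow V$ with normal coordinate $s$ vanishing at $\partial V$, set $t = -\log s$, and equip $V$ with a smooth complete Riemannian metric $g_V$ that agrees with the cylindrical model $\der t^2 + g_{\partial V}$ on this collar. Choose a smooth weight $\phi \colon V \to \mathbb{R}$ growing faster than any linear function in $t$ along the cylindrical end and satisfying the hypotheses of \cite{BuPr2002}. This yields the weighted Hodge decomposition of $\Leb^2_\phi \Omega^k(V)$ into harmonic, $\der$-exact, and $\der^*_\phi$-coexact summands, together with a weighted Green's operator $G_\phi$ inverting $\Delta_\phi = \der \der^*_\phi + \der^*_\phi \der$ on the orthogonal complement of the harmonic summand $\mathcal{H}^k_\phi$.

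Second, for $\omega = \der \tau \in \der \Omega^{k-1}_\cspt(V)$ the form $\omega$ lies in $\Leb^2_\phi$, is closed, and is $\Leb^2_\phi$-orthogonal to $\mathcal{H}^k_\phi$, since $\langle \der \tau, h\rangle_\phi = \langle \tau, \der^*_\phi h\rangle_\phi = 0$ for any $h \in \mathcal{H}^k_\phi$ (no boundary terms arise as $\tau$ is compactly supported). Define $I^k_{M, V}\omega := \der^*_\phi G_\phi \omega$ on $V$ and zero on $M \setminus V$. Since $\der$ commutes with $\Delta_\phi$ and $\der \omega = 0$, the form $\der G_\phi \omega$ lies in $\ker \Delta_\phi = \mathcal{H}^{k+1}_\phi$, and in particular $\der^*_\phi (\der G_\phi \omega) = 0$, so
\begin{equation*}
\omega = \Delta_\phi G_\phi \omega = \der(\der^*_\phi G_\phi \omega) + \der^*_\phi (\der G_\phi \omega) = \der(\der^*_\phi G_\phi \omega),
\end{equation*}
establishing $(\der \circ I^k_{M, V} \omega)|_V = \omega$ as required.

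Third, the main technical point is to verify that the zero extension is $\Cont^\infty$ across $\partial V$, equivalently that $\alpha := \der^*_\phi G_\phi \omega$ vanishes to infinite order at $\partial V$. Beyond the compact support of $\omega$, the form $G_\phi \omega$ is weighted harmonic on the cylindrical end, and its $\Leb^2_\phi$ bound combined with the rapid growth of $\phi$ forces decay in $t$ faster than any exponential. Weighted elliptic regularity on the unit tubes $\partial V \times [t, t+1]$ upgrades this to decay at the same rate of all derivatives in a frame adapted to the cylinder. Since $t = -\log s$, faster-than-exponential decay in $t$ is precisely $\Cont^\infty$ vanishing of $\alpha$ at $s = 0$ in the original smooth structure on $M$, so the zero extension lies in $\Omega^{k-1}(M, V)$ and $\omega \mapsto I^k_{M, V}\omega$ is the desired smooth linear operator. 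The principal obstacle throughout is this tuning of $\phi$: it must be chosen so that \cite{BuPr2002} applies while the resulting decay is strong enough to produce a $\Cont^\infty$ extension by zero.
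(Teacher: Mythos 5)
Your proposal is correct and takes essentially the same approach as the paper: both equip $V$ with a complete metric that is cylindrical near $\partial V$, invoke the weighted Hodge theory of Bueler--Prohorenkov with a rapidly growing weight, define $I^k_{M,V} = \der^*_\phi G_\phi$ (the paper takes $\mu = e^{2f^2}\mathrm{dvol}_{g_V}$ with $f = \rho^{-1}$), and extend by zero using infinite-order vanishing at $\partial V$. The only differences are cosmetic: you use the cylindrical coordinate $t = -\log s$ where the paper uses $r = 1/\rho$, and you sketch by hand the decay and elliptic-regularity estimates that the paper obtains directly from the cited Hodge decomposition, whose Green's operator already has domain and codomain equal to the weighted Schwartz space $e^{-2f^2}\mathcal{S}(\Lambda^k V)$.
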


\begin{proof}
  Let $g_N$ be a metric on $N=\partial V$.
  Let $U$ be a tubular neighborhood of $N$ and let $\rho$ be a defining function for $N$ such that $U=\rho^{-1}(-1,1)$ and $\rho>0$ on $V$.
  Fix a diffeomorphism $U\to N\times (-1,1)$ with the second component being $\rho$. Let $f=\rho^{-1}$ on $V$ and use this diffeomorphism to identify $U\cap V$ with $N\times (1,\infty)$.
  The metric $g_N \oplus dr^2$ on $N\times (1,\infty)$ may be extended to a complete metric $g_V$ on $V$. Let $\mathcal{S}(\Lambda^k V)$ be the space of smooth $k$-forms $\omega$ on $V$ with rapid decay in the sense that $\lim_{r\to\infty}|f^\ell \partial^{\alpha}\omega|(y,r) =0$ for any multiindex $\alpha$, $\ell\in \mathbb{N}$, and choice of local coordinates on $N$ (here the coordinate derivatives are with respect to $(y,r)$ and act only on the coefficients of the differential form). 
  A $k$-form $\omega$ in $e^{-2f^2}\mathcal{S}(\Lambda^k V)$ vanishes to infinite order on $N=\partial V$, and thus extends smoothly by zero to all of $M$.
  Let $\mu=e^{2f^2}\mathrm{dvol}_{g_{V}}$ where $\mathrm{dvol}_{g_{V}}$ is the Riemann-Lebesgue measure. Then $\der^*_\mu = e^{-2f^2}\der^*e^{2f^2}$ is the formal adjoint of $\der$ with respect to $\mu$. Let $\Delta_{\mu}= \der \der^*_\mu + \der^*_\mu\der$. 
  By the Hodge decomposition of \cite{BuPr2002} there exists a Green's operator $G_\mu$ for $\Delta_\mu$ with domain and codomain equal to $e^{-2f^2}\mathcal{S}(\Lambda^k V)$, which properly contains $\Omega^k_\cspt(V)$. 
  By definition we then have $\der \der_{\mu}^*G_\mu\omega=\omega$ for all $\omega\in \der \Omega^{k-1}_\cspt(V)$, and we define $I^k_{M, V}$ to be $\der_{\mu}^*G_\mu$ composed with extension by zero.
\end{proof}

\subsection*{Step 3: Piecewise construction of \texorpdfstring{$X_t$}{X\_t} with estimates} \label{ssec:moser-noncompact-prep}

Given a compact Riemannian manifold $(N, \gN)$ we denote, as in \cref{lem:primitive-compact-estimate}, the Hodge theoretic right inverse to the exterior derivative $\der:\Omega^1(N) \to\Omega^2(N)$ by $I^2_N$.
\begin{lemma} \label{lem:noncompact-path-method} 
  Let $M$ be a manifold, $\dim M \geq 4$, and $f \colon M \to \mathbb{R}$ an exhaustion.
  Let $[a_i, b_i]$, $i \in \mathbb{N}$, be intervals containing no critical values of $f$ such that $a_i < b_i < a_{i+1}$ and $b_i \rightarrow \infty$. Let $X = \cup_{i\in \mathbb{N}}[a_i,b_i]$ and suppose that $\Hml^2_\cspt(M \setminus f^{-1}(X),\mathbb{R}) \to H^2(M \setminus f^{-1}(X),\mathbb{R})$ is injective.
  Let $g$ be a Riemannian metric on $M$ such that $\nabla f$ is a unit Killing vector field on $f^{-1}(X)$.
  If $\omega_t$, $t\in[0,1]$, is an isotopy of symplectic forms on $M$, then there exists a time-dependent vector field $X_t$ on $M$, $t\in[0,1]$, satisfying $\der(X_t \intprod \omega_t) = -\dot{\omega}_t$ and on each $U_i =f^{-1}(a_i, b_i)$ 
  \begin{equation*}
    \Abs{X_t}_{\Leb^\infty(U_i, g)}\leq \Pa{\frac{b_i-a_i}{2}+\Abs{I^2_{f^{-1}(\frac{a_i+b_i}{2})}}_{\Leb^\infty}}\Abs{\omega_t^{-1}}_{\vphantom{\overset{.}{L}}\Leb^\infty(U_i, g)}\Abs{\vphantom{\omega_t^{-1}}\dot{\omega}_t}_{\vphantom{\overset{.}{L}}\Leb^\infty(U_i, g)}.
  \end{equation*}
\end{lemma}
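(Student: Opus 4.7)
The plan is to construct a global primitive $\sigma_t \in \Omega^1(M)$ of $\dot{\omega}_t$ that, on each $U_i$, coincides with a cylinder-operator primitive produced by \cref{lem:primitive-cylinder}; then $X_t := -\omega_t^{-1}\sigma_t$ satisfies $\der(X_t \intprod \omega_t) = -\dot{\omega}_t$ and inherits the stated $\Leb^\infty$ bound on $U_i$. Note that $\dot{\omega}_t$ is exact on $M$ because $[\omega_t]$ is constant in $t$.

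For the local piece I would first fix small $\delta_i > 0$ so that the enlargements $(a_i - \delta_i, b_i + \delta_i)$ are pairwise disjoint and contain no critical values of $f$. Properness of $f$ makes $N_i := f^{-1}(\tfrac{a_i+b_i}{2})$ compact, and the unit-Killing hypothesis on $\nabla f$ identifies $U_i$ isometrically with the product $N_i \times (a_i, b_i)$; this extends diffeomorphically to $\tilde{U}_i := f^{-1}((a_i - \delta_i, b_i + \delta_i))$. Applying \cref{lem:primitive-cylinder} with $r_0 = \tfrac{a_i+b_i}{2}$ produces $\bar{\sigma}_t^{(i)} := I^2_{N_i \times (a_i - \delta_i, b_i + \delta_i)}\dot{\omega}_t$ with $\der\bar{\sigma}_t^{(i)} = \dot{\omega}_t$, and the cylinder formula combined with \cref{lem:primitive-compact-estimate} gives
\[ \Abs{\bar{\sigma}_t^{(i)}}_{\Leb^\infty(U_i, g)} \leq \Pa{\tfrac{b_i - a_i}{2} + \Abs{I^2_{N_i}}_{\Leb^\infty}}\Abs{\dot{\omega}_t}_{\Leb^\infty(U_i, g)}, \]
since for $r \in (a_i, b_i)$ the integration path $[r_0, r]$ has length at most $(b_i-a_i)/2$ and lies inside $U_i$, where $g$ is a product metric.

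To patch globally I would choose flat cutoffs $\chi_i \in C^\infty(\mathbb{R})$ with $\chi_i \equiv 1$ on a neighborhood $[a_i - \epsilon_i, b_i + \epsilon_i]$ of $[a_i, b_i]$ (with $0 < \epsilon_i < \delta_i$) and $\operatorname{supp}\chi_i \subset (a_i - \delta_i, b_i + \delta_i)$, and set $\tau_t := \sum_i \chi_i(f) \bar{\sigma}_t^{(i)}$. Then $\tau_t = \bar{\sigma}_t^{(i)}$ on $U_i$, and because $\chi_i' \equiv 0$ on the plateau, the error $\eta_t := \dot{\omega}_t - \der\tau_t$ vanishes on an open neighborhood of each $\bar{U}_i$. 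Writing $V_0 := f^{-1}((-\infty, a_1))$ and $V_j := f^{-1}((b_j, a_{j+1}))$ for $j \geq 1$, each $V_j$ is precompact with smooth compact boundary (as $a_i, b_i$ are regular values), is a connected component of $M \setminus f^{-1}(X)$, and is disjoint from every $U_i$; by the flatness of the bumps, $\eta_t|_{V_j}$ is compactly supported in $V_j$. Exactness of $\dot{\omega}_t$ and $\der\tau_t$ gives $[\eta_t|_{V_j}] = 0$ in $H^2(V_j,\mathbb{R})$, and the hypothesis — decomposed over the disjoint union $M \setminus f^{-1}(X) = \bigsqcup_j V_j$ — forces $[\eta_t|_{V_j}] = 0$ in $\Hml^2_\cspt(V_j,\mathbb{R})$ as well. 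Then \cref{lem:primitive-form-compact-support} supplies $\nu_t^{(j)} \in \Omega^1(M, V_j)$ with $\der\nu_t^{(j)} = \eta_t|_{V_j}$, and $\sigma_t := \tau_t + \sum_j \nu_t^{(j)}$ is the required global primitive, with $\sigma_t|_{U_i} = \bar{\sigma}_t^{(i)}|_{U_i}$ since the correction is supported away from every $U_i$.

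The main obstacle is arranging the cutoffs and gap sets so that the correction $\sum_j \nu_t^{(j)}$ is simultaneously compactly supported in each $V_j$ (so that \cref{lem:primitive-form-compact-support} applies) and supported away from every $U_i$ (so that $\sigma_t|_{U_i}$ is unchanged from $\bar{\sigma}_t^{(i)}|_{U_i}$ and the bound is preserved). The flat-plateau choice above — with $\chi_i \equiv 1$ on a strict open neighborhood of $[a_i, b_i]$, so that each $\chi_i$-transition lies strictly inside a gap $V_j$ rather than on its boundary — is precisely what makes both conditions hold simultaneously.
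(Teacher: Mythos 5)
Your proposal is correct and follows essentially the same route as the paper's proof: cylinder primitives from \cref{lem:primitive-cylinder} (with $I^2_{N_i}$ from \cref{lem:primitive-compact-estimate} on the central fiber), flat-plateau cutoffs in the $f$-variable, and a correction on the gap regions via \cref{lem:primitive-form-compact-support}, using exactness on $M$ plus the injectivity hypothesis to kill the compactly supported class. The only (harmless, arguably cleaner) deviation is that you take the gluing regions to be the full gaps $f^{-1}((b_j,a_{j+1}))$, where the hypothesis on $M\setminus f^{-1}(X)$ applies verbatim, whereas the paper works with slightly shrunken regions $V_{i,i+1}$ strictly inside the gaps.
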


\begin{proof}
  For each $i \in \mathbb{N}$ let $J_i = (a_i, b_i)$ and choose enlarged intervals $\tilde{J}_i = (\tilde{a}_i,\tilde{b_i})$ such that the closures $[\tilde{a}_i, \tilde{b_i}]$ do not contain critical points of $f$, and $\tilde{a}_i < a_i < b_i < \tilde{b_i} < \tilde{a}_{i+1}$ for all $i \in \mathbb{N}$.
  For each $i \in \mathbb{N}$ let $\hat{J}_i = (\frac{a_i+2\tilde{a}_i}{3}, \frac{b_i+2\tilde{b}_i}{3})$, so that $J_i \subsetneq \hat{J}_i \subsetneq \tilde{J}_i$, and let $U_i = f^{-1}(J_i)$, $\hat{U}_i = f^{-1}(\hat{J}_i)$, and  $\tilde{U}_i = f^{-1}(\tilde{J}_i)$.
  Let $r_i =\frac{a_i+b_i}{2}$, and let $\iota_{r_i} \colon f^{-1}(r_i) \to M$ be the inclusion.
  Using the flow $\psi$ of $\nabla f$ we may identify $\tilde{U}_i$ with $f^{-1}(r_i) \times \tilde{J}_i$.
  We thus define the $1$-form $\sigma^i_t$ on $\tilde{U}_i$ by
  \begin{equation}\label{eq:sigma_i}
    \sigma^i_t(y, r) = \int_{r_i}^r \nabla f \intprod \psi_{s - r}^* \Pa{\dot{\omega}_t (y, s)} \der s + \big(I^2_{f^{-1}(r_i)} \iota_{r_i}^* \dot{\omega}_t\big)(y),
  \end{equation}
  for $(y, r) \in f^{-1}(r_i) \times \tilde{J}_i$.
  By \cref{lem:primitive-cylinder} we have $\der \sigma^i_t = \dot{\omega}_t$ on $\tilde{U}_i$.
  Let $\lambda_i \colon M \to [0, 1]$ be a smooth function supported in $\tilde{U}_i$ and equal to $1$ in a neighborhood of $\hat{U}_i$.
  Let $\alpha_t = \dot{\omega}_t - \sum_{i=1}^\infty \der(\lambda_i \sigma^i_t) = -\sum_{i=1}^\infty \der\lambda_i \wedge \sigma^i_t + \big(1 - \sum_{i=1}^\infty \lambda_i\big) \dot{\omega}_t$.
  Let $J_{0,1}= (-\infty,\frac{2a_{1}+\tilde{a}_{1}}{3})$ and for each $i \in \mathbb{N}$ let $J_{i, i+1}= (\frac{2b_i+\tilde{b}_i}{3}, \frac{2a_{i+1}+\tilde{a}_{i+1}}{3})$.
  For all $i \in \mathbb{N}\cup\{0\}$ let $V_{i, i+1} = f^{-1}(J_{i, i+1})$.
  Note that $\alpha_t$ is supported in the union of the gluing regions $V_{i, i+1}$, moreover $\Res{\alpha_t}_{V_{i, i+1}}$ is compactly supported in $V_{i, i+1}$.
  Since $\alpha_t$ is exact on $M$ and, by assumption, $\Hml^2_\cspt(V_{i, i+1}) \to\Hml^2(V_{i, i+1})$ is injective we have $[\alpha_t|_{V_{i, i+1}}]_{\Hml^2_\cspt(V_{i, i+1})}= 0$.
  By \cref{lem:primitive-form-compact-support}, there is $\beta^{i, i+1}_t \in \Omega^1(M)$ which vanishes outside $V_{i, i+1}$, and satisfies $\der \beta^{i, i+1}_t = \alpha_t$ on $V_{i, i+1}$.
  Let $\beta_t = \sum_{i = 0}^\infty \beta^{i, i+1}_t$ and let $\sigma_t = \sum_{i=1}^\infty (\lambda_i \sigma^i_t) + \beta_t$.
  Then $\dot{\omega}_t = \der \sigma_t$.
  Hence the time-dependent vector field given by $X_t = -\omega_t^{-1} \sigma_t$ for $t \in [0, 1]$ satisfies $\der(X_t \intprod \omega_t) = -\dot{\omega}_t$.
  The estimate follows from \cref{eq:sigma_i}.
\end{proof}

\section{Symplectic stability on manifolds with cylindrical ends} \label{sec:moser-cylindrical-end}

\begin{lemma} \label{lem:escape-lemma}
  Let $M$ be a smooth manifold and $X_t$, $t \in [0, 1]$, a smooth time-dependent vector field on $M$.
  Let $\gamma \colon J \to M$ be the maximal flow line of $X_t$ with $\gamma(0) = x_0$.
  If $\gamma(J)$ is contained in a compact set then $J = [0, 1]$.
\end{lemma}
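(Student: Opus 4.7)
The plan is to prove the result by contradiction via the standard flow continuation argument from ODE theory. Suppose $J \neq [0,1]$. Since $X_t$ is smooth and $t$ ranges over $[0,1]$, local existence and uniqueness for time-dependent ODEs guarantee that $J$ is relatively open in $[0,1]$ and contains $0$, so it must have the form $J = [0, T)$ with $T \in (0,1]$ and $T \notin J$. I would fix a compact set $K \supseteq \gamma(J)$ as provided by hypothesis, and enlarge $K$ to a relatively compact open set $K'$ with $\overline{K'}$ compact in $M$.

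The crucial ingredient is a uniform local existence time for initial data drawn from $[0,1] \times \overline{K'}$. Covering $\overline{K'}$ by finitely many coordinate charts whose closures sit inside larger ambient charts, $X_t$ has uniformly bounded coefficients and a uniform Lipschitz constant on each subchart for $t \in [0,1]$. A direct application of Picard-Lindel\"of in these coordinates then produces $\epsilon > 0$ such that for every $(t_0, p) \in [0,1] \times \overline{K'}$, the integral curve of $X_t$ through $p$ at time $t_0$ exists on $[t_0, \min(t_0 + \epsilon, 1)]$.

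To finish, choose an increasing sequence $t_n \in J$ with $t_n \to T$. By compactness of $K$, after passing to a subsequence we have $\gamma(t_n) \to y \in K \subseteq K'$. Pick $n$ large enough that $t_n > T - \epsilon/2$ and $\gamma(t_n) \in K'$. The uniform local existence statement at $(t_n, \gamma(t_n))$ yields an integral curve defined up to time $\min(t_n + \epsilon, 1) > T$; by uniqueness this curve agrees with $\gamma$ on $[t_n, T)$, and hence glues with $\gamma$ to produce an extension of the flow line defined strictly past $T$, contradicting maximality of $J$.

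The main technical hurdle is rigorously establishing the uniform existence time $\epsilon$ on a manifold rather than in $\mathbb{R}^n$. This is resolved by the finite coordinate cover of $\overline{K'}$ together with Euclidean Picard-Lindel\"of, taking $\epsilon$ to be the minimum of the finitely many chart-wise existence times; smoothness of $X_t$ in $(t,x)$ and compactness of $\overline{K'} \times [0,1]$ ensure the necessary uniform bounds.
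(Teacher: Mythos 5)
Your proof is correct, but it takes a genuinely different route from the paper's. The paper does not redo Picard--Lindel\"of: it suspends the time-dependent field to the autonomous field $\tilde X = X_t + \partial_t$ on $M \times [0,1]$, observes that the maximal integral curve of $\tilde X$ through $(x_0,0)$ is $\tilde\gamma(t) = (\gamma(t), t)$ with the same maximal domain $J$, and then invokes the standard Escape Lemma for autonomous vector fields (Lemma 9.19 in Lee): if $J \neq [0,1]$ then $\tilde\gamma(J)$ is contained in no compact subset of $M \times [0,1]$, contradicting $\tilde\gamma(J) \subseteq K \times [0,1]$. That reduction is shorter, at the cost of quoting a black box and of the mild wrinkle that $M \times [0,1]$ is a manifold with boundary (handled, e.g., by extending $X_t$ to $t$ in an open interval); your argument is instead self-contained --- the uniform existence time $\epsilon$ on $[0,1] \times \overline{K'}$ from a finite chart cover, followed by uniqueness-based gluing, is precisely the classical proof of the escape lemma itself, so you are in effect reproving the ingredient the paper cites. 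Two small points to tighten: the subsequence with $\gamma(t_n) \to y$ is superfluous, since $\gamma(t_n) \in K \subseteq K'$ for every $n$, which is all you actually use; and your claim that the extended curve reaches time $\min(t_n+\epsilon,1) > T$ fails in the boundary case $T = 1$, where $\min(t_n+\epsilon,1) = 1 = T$ --- but the contradiction survives, because the glued curve is then defined on the closed interval $[0,1]$, which strictly contains $J = [0,1)$ and so already violates maximality. With that sentence adjusted, your argument is complete.
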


\begin{proof}
  Suppose that $J \neq [0, 1]$, then there is $T \in (0, 1]$ such that $J = [0, T)$.
  Define $\tilde X$ on $M \times [0, 1]$ by $\tilde X = X_t + \partial_t$, and let $\tilde \gamma$ be the maximal integral curve of $\tilde X$ with $\tilde{\gamma}(0) = (x_0, 0)$.
  Then $\tilde \gamma$ has maximal domain $J$ and is given by $\tilde{\gamma}(t) = (\gamma(t), t)$.
  By the standard Escape Lemma \cite[Lemma 9.19]{MR2954043} $\tilde{\gamma}(J)$ is not contained in any compact subset of $M \times [0, 1]$.
  But this implies that $\gamma(J)$ is not contained in any compact subset of $M$.
\end{proof}

\begin{lemma} \label{lem:moser-noncompact-exhausted}
  Suppose that $M$ is a noncompact manifold and $f \colon M \to \mathbb{R}$ is an exhaustion such that $\Hml^1(f^{-1}(r),\mathbb{R}) = 0$ for $r > R$.
  Let $\Set{r_i}_{i \in \mathbb{N}} \subset \mathbb{R}_{>R}$, $\Set{\delta_i}_{i \in \mathbb{N}}\subset \mathbb{R}_{>0}$, and $\Set{\alpha_i}_{i \in \mathbb{N}} \subset \mathbb{R}_{\geq 0}$ be sequences such that the intervals $[r_i-\delta_i, r_i + \delta_i]$ are disjoint and contain no critical values of $f$, and $\sum_{i=1}^\infty \alpha_i\delta_i = \infty$.
  Let $g$ be a metric on $M$ which is a product on each $U_i = f^{-1}((r_i - \delta_i, r_i + \delta_i)) \cong f^{-1}(r_i) \times (r_i - \delta_i, r_i + \delta_i)$.
  Then an isotopy of symplectic forms $\omega_t$, $t\in[0,1]$, such that
  \begin{equation*}
    \int_0^1 \sup_{i \in \mathbb{N}} \alpha_i \Pa{\delta_i + \Abs{I^2_{f^{-1}(r_i)}}_{\Leb^\infty}} \Abs{\omega_t^{-1}}_{\vphantom{\overset{o}{L}}\Leb^\infty(U_i, g)} \Abs{\vphantom{\omega_t^{-1}}\dot{\omega}_t}_{\vphantom{\overset{.}{L}}\Leb^\infty(U_i, g)} \der t < \infty.
  \end{equation*}
  is a strong isotopy.
\end{lemma}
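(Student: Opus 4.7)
The plan is to combine the Path Method vector field of \cref{lem:noncompact-path-method} with a length-based escape argument: the divergent quantity $\sum_i \alpha_i \delta_i = \infty$ will supply the contradiction to finiteness of the hypothesis integral if the flow blows up before time $1$.

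First I would reorder so that the disjoint intervals $[r_i - \delta_i, r_i + \delta_i]$ are strictly increasing with $r_i + \delta_i \to \infty$ (only indices escaping to infinity can contribute to the divergent sum since the intervals lie in $\mathbb{R}_{>R}$ and are disjoint). Setting $a_i = r_i - \delta_i$, $b_i = r_i + \delta_i$, the assumption $\Hml^1(f^{-1}(r), \mathbb{R}) = 0$ for $r > R$ supplies injectivity of $\Hml^2_\cspt \to \Hml^2$ on every gluing collar (each diffeomorphic to a product $N \times J$ with $\Hml^1(N, \mathbb{R}) = 0$), and, once $a_1 > R$, the long exact sequence of the pair $(f^{-1}((-\infty, a_1]), f^{-1}(a_1))$ gives the same injectivity on the compact bottom piece. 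Applying \cref{lem:noncompact-path-method} then produces a time-dependent vector field $X_t$ with $\der(X_t \intprod \omega_t) = -\dot{\omega}_t$ and
\[
  \Abs{X_t}_{\Leb^\infty(U_i, g)} \leq \Pa{\delta_i + \Abs{I^2_{f^{-1}(r_i)}}_{\Leb^\infty}} \Abs{\omega_t^{-1}}_{\vphantom{\overset{.}{L}}\Leb^\infty(U_i, g)} \Abs{\vphantom{\omega_t^{-1}}\dot{\omega}_t}_{\vphantom{\overset{.}{L}}\Leb^\infty(U_i, g)} = \frac{F_i(t)}{\alpha_i},
\]
where $F_i(t)$ denotes the integrand in the hypothesis, so that $C = \int_0^1 \sup_i F_i(t)\,\der t < \infty$.

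Next I would show that the flow $\varphi_t$ of $X_t$ is defined on all of $M \times [0, 1]$; since $\der(X_t \intprod \omega_t) = -\dot{\omega}_t$ and $\omega_t$ is closed, this forces $\frac{\der}{\der t}(\varphi_t^* \omega_t) = 0$ and hence $\varphi_t^* \omega_t = \omega_0$, giving the strong isotopy. Suppose for contradiction that some maximal integral curve $\gamma \colon [0, T) \to M$ with $T \leq 1$ leaves every compact set; by \cref{lem:escape-lemma} and properness of $f$, $f \circ \gamma$ is unbounded, and since $f$ is bounded below in the application to the main theorem, the escape is upward. For every sufficiently large $i$ I set $\tau_i = \inf\Set{t : f(\gamma(t)) \geq r_i + \delta_i}$ and $\sigma_i = \sup\Set{t < \tau_i : f(\gamma(t)) = r_i - \delta_i}$; then $\gamma((\sigma_i, \tau_i)) \subset U_i$ by continuity, and the ordering $r_i + \delta_i < r_{i+1} - \delta_{i+1}$ forces $\tau_i < \sigma_{i+1}$, so the intervals $[\sigma_i, \tau_i]$ are pairwise disjoint. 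Because $\nabla f$ is a unit vector field on $U_i$, the $g$-length of $\gamma|_{[\sigma_i, \tau_i]}$ is at least the $f$-variation of the same curve, hence at least $2\delta_i$.

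Combining this with the pointwise bound on $X_t$ inside $U_i$ gives, for $i_0$ large,
\[
  \sum_{i \geq i_0} 2\alpha_i \delta_i \leq \sum_{i \geq i_0} \int_{\sigma_i}^{\tau_i} \alpha_i \Abs{X_t(\gamma(t))}_g\,\der t \leq \sum_{i \geq i_0} \int_{\sigma_i}^{\tau_i} F_i(t)\,\der t \leq \int_0^1 \sup_j F_j(t)\,\der t = C,
\]
which contradicts $\sum_i \alpha_i \delta_i = \infty$. I expect the most delicate point to be the first step: cleanly invoking \cref{lem:noncompact-path-method} from only the hypothesis $\Hml^1(f^{-1}(r), \mathbb{R}) = 0$ for $r > R$, which requires isolating the compact bottom of $M$ and checking the cohomological condition there via the long exact sequence. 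The escape step itself is essentially geometric, the key observation being that each traversal of $U_i$ costs at least $2\delta_i$ of $g$-length and at most $\alpha_i^{-1} \int_{\sigma_i}^{\tau_i} F_i(t)\,\der t$ via the vector field bound.
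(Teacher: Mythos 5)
Your proposal is correct and takes essentially the same route as the paper's own proof: apply \cref{lem:noncompact-path-method} to produce $X_t$ with the stated bound, reduce global existence of the flow to a length estimate via \cref{lem:escape-lemma}, observe that each crossing of $U_i$ costs $g$-length at least $\delta_i$ (the paper's version of your $2\delta_i$, since $\nabla f$ has unit length on $U_i$), and contradict $\sum_i \alpha_i \delta_i = \infty$ using the disjointness of the time sets $\gamma^{-1}(U_i) \subset [0,1]$, exactly as in the paper. Two small slips in your justifications, neither affecting the structure of the argument: the components of $M \setminus f^{-1}(X)$ between consecutive $U_i$ need not be products $N \times J$ (the gaps may contain critical values of $f$), but injectivity of $\Hml^2_\cspt \to \Hml^2$ on them follows from the same long-exact-sequence argument you invoke for the bottom piece, since every boundary component is a regular level set $f^{-1}(r)$, $r > R$, with vanishing $\Hml^1$; and divergence of $\sum_i \alpha_i \delta_i$ does not by itself force the intervals to escape to infinity (take $\delta_i = i^{-2}$, $\alpha_i = i$ inside a bounded region), so the condition $r_i \to \infty$ must be read as implicit in the lemma --- the paper's proof makes the same tacit assumption when it fixes $i_0$ with $f(x) < a_{i_0}$ for arbitrary $x$, and it holds in the application to the Main Theorem.
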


\begin{proof}
  Let $J_i = (a_i, b_i) = (r_i - \delta_i, r_i + \delta_i)$ for each $i \in \mathbb{N}$.
  The assumption that $\Hml^1(f^{-1}(r),\mathbb{R}) = 0$ for all $r>R$ implies that $\dim M >2$ and $\Hml^2_\cspt(M \setminus f^{-1}(X),\mathbb{R}) \to \Hml^2(M \setminus f^{-1}(X),\mathbb{R})$ is injective for $X = \cup_{i \in \mathbb{N}}[a_i, b_i]$.
  Hence $\dim M \geq 4$ and we can apply \cref{lem:noncompact-path-method}.
  Let $X_t$ be the time-dependent vector field of \cref{lem:noncompact-path-method}.
  It suffices to show that the flow of $X_t$ starting at $t_0= 0$ exists globally for all $t \in [0, 1]$.
  Let $x \in M$.
  Fix $i_0$ such that $f(x) < a_{i_0}$.
  Let $\gamma$ be the maximal flow line of $X_t$ with $\gamma(0) = x$.
  Suppose that the maximal domain of $\gamma$ is $[0, T)$ with $0 < T \leq 1$.
  Then by \cref{lem:escape-lemma} the image $\gamma([0, T))$ must not be contained in any compact set.
  So $\lim_{t \to T} f(\gamma(t)) = \infty$.
  It follows that $\gamma$ must pass through each set $U_i$ with $i \geq i_0$.
  For each $i$ let $\ell_i = \int_{\gamma^{-1}(U_i)} \abs{X(\gamma(t))}_g\, \der t$.
  Then $\ell_i \geq \delta_i$ for each $i \geq i_0$, so $\sum_{i = i_0}^\infty \alpha_i \ell_i \geq \sum_{i = i_0}^\infty \alpha_i \delta_i = \infty$.
  
  On the other hand, by the bound on $\Abs{X_t(x)}_{\Leb^\infty(U_i, g)}$ from \cref{lem:noncompact-path-method} 
  \begin{equation*}
    \ell_i \leq \int_{\gamma^{-1}(U_i)} \Pa{\delta_i + \Abs{I^2_{f^{-1}(r_i)}}_{\Leb^\infty}} \Abs{\omega_t^{-1}}_{\vphantom{\overset{o}{L}}\Leb^\infty(U_i, g)} \Abs{\vphantom{\omega_t^{-1}}\dot{\omega}_t}_{\vphantom{\overset{.}{L}}\Leb^\infty(U_i, g)} \der t
  \end{equation*}
  and thus $\alpha_i\ell_i \leq \int_{\gamma^{-1}(U_i)} \alpha_i \Pa{\delta_i + \Abs{I^2_{f^{-1}(r_i)}}_{\Leb^\infty}} \Abs{\omega_t^{-1}}_{\vphantom{\overset{o}{L}}\Leb^\infty(U_i, g)} \Abs{\vphantom{\omega_t^{-1}}\dot{\omega}_t}_{\vphantom{\overset{.}{L}}\Leb^\infty(U_i, g)} \der t$.
  Since the subsets $\gamma^{-1}(U_i)$ of $[0, 1]$ are disjoint we have that
  \begin{equation*}
    \sum_{i = i_0}^\infty \alpha_i \ell_i \leq \int_0^1 \sup_{i \in \mathbb{N}} \alpha_i \Pa{\delta_i + \Abs{I^2_{f^{-1}(r_i)}}_{\Leb^\infty}} \Abs{\omega_t^{-1}}_{\vphantom{\overset{o}{L}}\Leb^\infty(U_i, g)} \Abs{\vphantom{\omega_t^{-1}}\dot{\omega}_t}_{\vphantom{\overset{.}{L}}\Leb^\infty(U_i, g)} \der t < \infty,
  \end{equation*}
  a contradiction.
  We conclude that $\gamma$ has domain $[0, 1]$.
\end{proof}

\begin{proof}[Proof of Main Theorem]
  Let $(M,g)$ be a Riemannian manifold with cylindrical ends and $f \colon M \to \mathbb{R}_{+}$ its radial coordinate function. Let $\omega_t$, $t \in [0, 1]$, be an isotopy of symplectic forms with total log-variation $\int_0^1 \LogVar(\omega_t,\dot{\omega}_t)\, \der t < \infty$. 
  Since $M$ is symplectic and $\Hml^1(\partial M,\mathbb{R}) = 0$, $\dim M \geq 4$. Let $\Delta$ denote the diagonal in $(1, \infty) \times (1, \infty)$.
  The finiteness of the total log-variation is equivalent to $\int_0^1 \sup_{(f(x), f(x'))\in \Delta} f(x)^{-1} \abs{\omega_t^{-1}(x)}_g\abs{\dot{\omega}_t(x')}_g \der t <\infty$.
  By continuity, any point $(c, c, t)\in \Delta \times [0, 1]$ has an open neighborhood $W_{c, t}$ such that
  $f(z)^{-1} \abs{\omega_s^{-1}(z)}_g \abs{\dot{\omega}_s(z')}_g\tenoreleven{$ is less than $}{<}\sup_{(f(x), f(x'))\in \Delta} f(x)^{-1} \abs{\omega_s^{-1}(x)}_g \abs{\dot{\omega}_s(x')}_g + 1$ 
  for all $z$, $z'$ and $s$ with $(f(z), f(z'), s)\in W_{c, t}$.
  So by the compactness of $[0, 1]$ there exists a neighborhood $W$ of $\Delta \subset (1, \infty) \times (1, \infty)$ such that
  \begin{equation}\label{eq:neighborhoodW}
    \int_0^1 \sup_{(f(x), f(x'))\in W} f(x)^{-1} \abs{\omega_t^{-1}(x)}_g\abs{\dot{\omega}_t(x')}_g \der t <\infty.
  \end{equation}
  Let $\mu \colon (1, \infty) \to \mathbb{R}_+$ be a continuous function such that $r \mapsto r + \mu(r)$ is strictly increasing and $\Set{(r, s) \in \mathbb{R}_+^2 \mmid -\mu(r) < r - s < \mu(s)} \subset W.$
  We construct disjoint subintervals $\Set{(r_i - \delta_i, r_i + \delta_i)}_{i \in \mathbb{N}}$ in $(1, \infty)$ as follows.
   Let $\delta_1 = \min\Set{1, \mu(2)/2}$ and $r_1 = 2 + \delta_1$.
  Then inductively let $\delta_{i+1} =  \min\{1,\allowbreak \mu(r_i + \delta_i)/2\}$ and $r_{i+1} = r_i + \delta_i + \delta_{i+1}$ for all $i \in\mathbb{N}$.
  We have $r_i \to \infty$ as $i \to \infty$, since otherwise the sequence $r_i$ would converge to some point $r_\infty$ with $\mu(r_\infty) = 0$.

  For each $i \in \mathbb{N}$, let $\alpha_i=1/(r_i + \delta_i)$.
  Then
  \begin{equation*}
    \sum_{i =1}^\infty 2\alpha_i\delta_i = \sum_{i =1}^\infty \Pa{1 - \frac{r_i - \delta_i}{r_{i+1} - \delta_{i+1}}} 
    \geq \sum_{i =1}^\infty \min\Set{\frac12, \frac12 \log \Pa{\frac{r_{i+1} - \delta_{i+1}}{r_i - \delta_i}}}.
  \end{equation*}
  In the last sum there are either infinitely many $i$ for which the $i$\textsuperscript{th} summand is $\frac12$, or there is some fixed $i_0 \in \mathbb{N}$ such that $i$\textsuperscript{th} summand is $\frac12 \log \Pa{\frac{r_{i+1} - \delta_{i+1}}{r_i - \delta_i}}$ for all $i\geq i_0$.
  In either case the sum diverges.
  So $\sum_{i =1}^\infty \alpha_i\delta_i = \infty$.
  By reducing each $\delta_i$ a little bit we can ensure that $r_i + \delta_i < r_{i+1} - \delta_{i+1}$ with $\sum_{i =1}^\infty \alpha_i \delta_i$ still being $\infty$.
  For each $i \in \mathbb{N}$ let $J_i = (r_i - \delta_i, r_i + \delta_i)$ and $U_i = f^{-1}(J_i)$.
  Note that \cref{eq:neighborhoodW} holds with $W$ replaced by the subset $\cup_{i \in \mathbb{N}} J_i \times J_i$, which implies $\int_0^1 \sup_{i \in \mathbb{N}} \alpha_i \Abs{\omega_t^{-1}}_{\vphantom{\overset{o}{L}}\Leb^\infty(U_i, g)} \Abs{\vphantom{\omega_t^{-1}}\dot{\omega}_t}_{\vphantom{\overset{.}{L}}\Leb^\infty(U_i, g)} < \infty$, since $\alpha_i \leq f(x)^{-1}$ for $x \in U_i$, $i \in \mathbb{N}$.
  Now since the hypersurfaces $f^{-1}(r_i)$ are all isometric to $f^{-1}(r_1)$, the quantity $\Abs{I^2_{f^{-1}(r_i)}}_{\Leb^\infty}$ is independent of $i$, and since $\delta_i\leq 1$ for all $i$ we have
  \begin{align*}
    \int_0^1 \sup_{i \in \mathbb{N}} \alpha_i \Pa{\delta_i + \Abs{I^2_{f^{-1}(r_i)}}_{\Leb^\infty}} \Abs{\omega_t^{-1}}_{\vphantom{\overset{o}{L}}\Leb^\infty(U_i, g)} \Abs{\vphantom{\omega_t^{-1}}\dot{\omega}_t}_{\vphantom{\overset{.}{L}}\Leb^\infty(U_i, g)} \der t \qquad \\
    \qquad \leq \Pa{1+\Abs{I^2_{f^{-1}(r_1)}}_{\Leb^\infty}}\int_0^1 \sup_{i \in \mathbb{N}} \alpha_i \Abs{\omega_t^{-1}}_{\vphantom{\overset{o}{L}}\Leb^\infty(U_i, g)} \Abs{\vphantom{\omega_t^{-1}}\dot{\omega}_t}_{\vphantom{\overset{.}{L}}\Leb^\infty(U_i, g)}.
  \end{align*}
  The result then follows from \cref{lem:moser-noncompact-exhausted}.
\end{proof}


\begin{proof}[Proof of \cref{cor:moser-linear-family}]
  Suppose $A = \sup_{r \in f(M)} \Abs{\omega^{-1}}_r \Abs{\vphantom{\omega^{-1}} \der \sigma}_r < 1$.
  For any $x \in M$, $\omega^{-1}(x) \der \sigma(x)$ is an endomorphism of $(\Tg_xM, \abs{\,\cdot\,}_g)$ with operator norm at most $A$.
  If $t < A^{-1}$, then $\abs{t\omega^{-1}(x) \der \sigma(x)}_g < 1$ for any $x \in M$, which means $1 + t\omega^{-1}(x) \der \sigma(x)$ is invertible.
  So $\omega_t = \omega + t \der \sigma$ is symplectic for all $t \in [0, 1]$.
  Moreover, for any $x \in M$
  \begin{equation*}
    \abs{\omega_t^{-1}(x)}_g \leq \abs{(1 + t\omega^{-1} \der \sigma)^{-1}(x)}_g \abs{\omega^{-1}(x)}_g \leq (1-tA)^{-1} \abs{\omega^{-1}(x)}_g.
  \end{equation*}
  Thus by assumption we have
  \begin{equation*}
     \int_0^1 \LogVar(\omega_t,\dot{\omega}_t)\, \der t \leq \int_0^1 \sup_{r\geq 1} (1-tA)^{-1}\Abs{\omega^{-1}}_r \Abs{\vphantom{\omega^{-1}}\der \sigma}_r  \der t \leq \frac{A}{1-A} < \infty.
  \end{equation*}
\end{proof}

\begin{proof}[Proof of \cref{cor:symplecic-compact-punctured}]
Note that \cref{cor:moser-euclidean} generalizes trivially to manifolds equipped with a metric which is Euclidean on the end(s). We will make use of this generalization, rather than arguing directly from the Main Theorem, because it makes the coordinate computations easier.
It suffices to treat the case where $F$ contains just one point $p$.
Let $g$ be a metric on $M$, and let $U$ be geodesic ball about $p$. Scaling $g$ if necessary we may take $U$ to be a unit geodesic ball, and we may use normal (exponential) coordinates to identify $(U,p)$ with $(B^{2n},0)$ where $B^{2n}$ is the unit ball in $\mathbb{R}^{2n}$. Let $\phi:U\setminus \{p\}\to \mathbb{R}^{2n}\setminus \overline{B^{2n}}$ be the diffeomorphism which in normal coordinates sends $x\in B^{2n}\setminus \{0\}$ to $\frac{x}{|x|^2}$. Under $\phi$ the radial coordinate $r$ on $\mathbb{R}^{2n}\setminus \overline{B^{2n}}$ pulls back to the reciprocal of the geodesic distance from $p$ on $U\setminus \{p\}$. Let $(x_i)$ denote the standard coordinates on $B^{2n}$ and $(\bar{x}_i)$ those on $\mathbb{R}^{2n}\setminus \overline{B^{2n}}$. Then $\phi_*\der x_i = \sum_{i=1}^{2n} (\frac{\delta_{ij}}{\abs{\bar{x}}^2}+2\frac{\bar{x}_i\bar{x}_j}{\abs{\bar{x}}^4})\der \bar{x}_j$ and $\phi_*\partial_{x_i} = \sum_{i=1}^{2n}(\abs{\bar{x}}^2\delta_{ij}+2\bar{x}_i\bar{x}_j)\partial_{\bar{x}_j}$.
Since $\dot{\omega}_t$ is bounded with respect to $g$, uniformly in $t$, the corresponding forms $\dot{\bar{\omega}}_t=\phi_*\dot{\omega}_t$ on $\mathbb{R}^{2n}\setminus \overline{B^{2n}}$ are $O(r^{-4})$, uniformly in $t$.
Similarly, from the differential of $\phi$ one has that $\bar{\omega}_t^{-1}=\phi_*\omega_t^{-1}$ is $O(r^4)$ uniformly in $t$. Pulling the Euclidean metric on $\mathbb{R}^{2n}\setminus \overline{B^{2n}}$ back to $U\setminus\{p\}$ and extending this to a metric $g'$ on $M\setminus F$ we may apply (a trivial generalization of) \cref{cor:moser-euclidean} to conclude that $\omega_t$ is a strong isotopy on $M \setminus F$.
\end{proof}
\section{Examples}

\begin{example} \label{exm:example-product}
  Consider $\mathbb{R}^{2n}$, $2n\geq 4$, with coordinates $(x_1, y_1, \dotsc, x_n, y_n)$. 
  Let $U$ be an open subset of $\mathbb{R}^{2n}$. 
  Let $f_i \in \Cont^\infty(U)$, for $i=1, \dotsc, n$. 
  Then $\omega = \sum_{i=1}^n f_i \der x_i \wedge \der y_i$ is a symplectic form if and only if each of the $f_i$ is nowhere vanishing and depends only on the coordinates $x_i$ and $y_i$. 
  The isotopy of symplectic forms $\omega_t = \sum_{i =1}^n f_i(t, x_i, y_i) \der x_i \wedge\der y_i $, $t \in [0, 1]$,  satisfies the assumption of \cref{cor:moser-euclidean} if the functions $f_i$ are bounded away from zero and have bounded time derivative.  
  Suppose $a_i \in \mathbb{R} \setminus \Set{0}$. 
  Consider the symplectic forms $\omega_t = a_1 \sqrt{x_1^2 + y_1^2 + 1 + t^2} \, \der x_1 \wedge \der y_1 + \sum_{i=2}^n a_i \der x_i \wedge \der y_i$, $t\in[0,1]$.
  By \cref{cor:moser-euclidean} there is a smooth path of diffeomorphisms $\varphi_t$ of $\mathbb{R}^{2n}$, $t \in [0, 1]$, such that $\varphi_t^* \omega_t = \omega_0$.
\end{example}

\begin{example} \label{exm:example-unbounded}
  Here we apply our result to an isotopy $\omega_t$, $t \in [0, 1]$, for which the norm of the derivative grows with $r$, while the norm of the inverse decays.
  Let $\phi \colon [0, +\infty) \to [0, +\infty)$ be a diffeomorphism such that $\Res{\phi}_{[0, 1)} = \identity$, and $\phi(r)/r$ is increasing.
  Then $\hat \phi \colon \mathbb{R}^4 \to \mathbb{R}^4$, $\hat \phi(x) = \frac{\phi(\abs{x})}{\abs{x}} x$ is a diffeomorphism.
  If $\omega = \hat \phi^* \omega_0$, then with $r = \abs{x}$ we have
  \begin{align*}
    \omega(x_1, \dotsc, x_4) 
    &= \Pa{A + B(x_1^2 + x_2^2)} \der x_1 \wedge \der x_2 + \Pa{A + B(x_3^2 + x_4^2)} \der x_3 \wedge \der x_4 \\
    &\phantom{{}=} - B(x_1 x_4 - x_2 x_3) \Pa{\der x_1 \wedge \der x_3 + \der x_2 \wedge \der x_4} \\
    &\phantom{{}=} + B(x_1 x_3 + x_2 x_4) \Pa{\der x_1 \wedge \der x_4 - \der x_2 \wedge \der x_3},
  \end{align*}
  where $A = \Pa{\frac{\phi(r)}r}^2$ and $B = \frac{\phi(r)}{r^2} \Pa{\frac{\phi(r)}r}' \geq 0$. 
  Let us fix $p > 1$,  $c \in (0, 1)$ and define $\phi$ by $\phi(r) = r^p$ for $r \geq 1$.
  Since we want $\phi$ to be smooth, we should perturb it in a neighborhood of $r=1$.
  None of our estimates are affected if this perturbation is sufficiently small, so we proceed as if $\phi$ were given by the exact formula.
  Then for $r \geq 1$ we have $A = r^{2p-2}$, $B = (p-1) r^{2p-4}$, and 
  \begin{align*}
    &\phantom{{}=} r^{4-2p} \omega(x_1, \dotsc, x_4) \\
    &= \Pa{px_1^2 + px_2^2 + x_3^2 + x_4^2} \der x_1 \wedge \der x_2 + \Pa{x_1^2 + x_2^2 + px_3^2 + px_4^2} \der x_3 \wedge \der x_4 \\
    &\phantom{{}=} - (p-1)(x_1 x_4 - x_2 x_3) \Pa{\der x_1 \wedge \der x_3 + \der x_2 \wedge \der x_4} \\
    &\phantom{{}=} + (p-1)(x_1 x_3 + x_2 x_4) \Pa{\der x_1 \wedge \der x_4 - \der x_2 \wedge \der x_3}.
  \end{align*}
  Let $\lambda \colon [0, +\infty) \to [0, +\infty)$ be an increasing smooth function which vanishes on $[0, \frac12]$, equals $1$ in $[1, +\infty)$, and satisfies $\lambda' \leq 3$.
  Let 
  \begin{equation*}
    \sigma = \frac{cp}{6(2p-1)^2} \lambda(r) r^{2p-1} \Pa{\der x_1 +\der x_2 + \der x_3 + \der x_4}.
  \end{equation*}
  Then $\der \sigma = \frac{cp}{6(2p-1)^2} \Pa{(2p-1) \lambda + \lambda' r} r^{2p-3} \sum_{i < j} (x_i - x_j) \der x_i \wedge \der x_j$.
  For an $m \times m$-matrix $Q = (q_{ij})$, the $\ell^1$ operator norm is $\abs{Q}_{\ell^1} = \max_{1 \leq i \leq m} \sum_{j=1}^m \abs{q_{ij}}$.
  For convenience we define $\Abs{\cdot}_r$ as the supremum over the sphere of radius $r$ of this pointwise norm (rather than of the equivalent $\ell^2$ norm). We then have $\Abs{\omega^{-1}}_r \leq  (2 - p^{-1}) r^{2-2p}$ if $r \geq 1$,
  and $\Abs{\omega^{-1}}_r = 1$ if $r < 1$.
  Similarly
  $\Abs{\vphantom{\omega^{-1}}\der \sigma}_r \leq \frac{cp}{2p-1} r^{2p-2}$ if $r \geq 1$,
  and $\abs{\vphantom{\omega_t^{-1}}\der\sigma(x)} \leq c$ if $r < 1$. 
  Since $\abs{\omega^{-1}(x)} \abs{\vphantom{\omega_t^{-1}}\der \sigma(x)} \leq c < 1$ the $2$-form $\omega_t = \omega + t \der \sigma$ is nondegenerate for every $t \in [0, 1]$ (cf. the proof of \cref{cor:moser-linear-family}). 
  Moreover, $\int_0^1 \sup_{r \geq 1} \Abs{\omega_t^{-1}}_r \Abs{\vphantom{\omega_t^{-1}}\dot{\omega}_t}_r$ is finite.
  So $\omega_t$, $t\in[0,1]$, is a strong isotopy by \cref{cor:moser-euclidean}.
\end{example}

\begin{example}\label{bad-example} Here we give an example of a strong isotopy with infinite log variation. 
  Consider the unit sphere $S^3$ contained in $\mathbb{R}^4$ with coordinates $(x_1,y_1,x_2,y_2)$, and let $\alpha_{0}=\frac{1}{2}(x_1\der y_1 - y_1 \der x_1 + x_2 \der y_2 - y_2 \der x_2)$ be the standard contact form on $S^3$. Consider the rescaled contact form $\alpha = (2x_1^2 + y_1^2)\alpha_0$ on $S^3$. The structure $(S^3,\alpha)$ can be realized as the boundary of a Liouville domain $(\Omega, \omega, V)$ in the sense of \cite{CiEl2012} (in fact this may be taken to be the boundary of a star convex domain in $\mathbb{R}^4$ with the standard symplectic form). The Liouville completion of $(\Omega, \omega, V)$ is constructed by attaching $S^3\times [0,\infty)$ to $\Omega$, where the symplectic form on $S^3\times [0,\infty)$ is $\der(e^r\alpha)$ with $r$ the coordinate on $[0,\infty)$. The resulting symplectic manifold $(M,\omega)$ is symplectomorphic to the standard $\mathbb{R}^4$, but this construction allows us to more easily write down the required family of diffeomorphisms of $M$. For $t\in \mathbb{R}$ define $\phi_t: S^3\times [0,\infty) \to S^3\times [0,\infty)$ by $(x,r)\mapsto (e^{itr^p}\cdot x, r)$, where $e^{i\theta}$ acts on $S^3$ by a rotation through angle $\theta$ in the $(x_1,y_1)$-plane. The family $\phi_t$ may be extended to a smooth $1$-parameter family of diffeomorphism of $M$, which we still denote $\phi_t$. Let $\omega_t = \phi_t^*\omega$, $t\in [0,1]$. Then on $M\setminus \Omega = S^3\times [0,\infty)$ we have $\omega_t = e^r[(1+\cos^2(tr^p))x_1^2-\sin(2tr^p)x_1y_1+(1+\sin^2(tr^p))y_1^2](\der \alpha_0+\der r \wedge \alpha) + e^r [ 2(1+\cos^2(tr^p))x_1\der x_1 - \sin(2tr^p)(x_1\der y_1 + y_1\der x_1) + 2(1+\sin^2(tr^p))y_1\der y_1 ]\wedge \alpha_0$. From this it is easy to see that $\Abs{\omega_t^{-1}}_r \sim e^{-r}$ whereas $\Abs{\vphantom{\omega_t^{-1}}\dot{\omega}_t}_r \sim r^p e^r$, so that $\Abs{\omega_t^{-1}}_r \Abs{\vphantom{\omega_t^{-1}}\dot{\omega}_t}_r \sim r^p$ and hence the Main Theorem does not apply. Although this is a path of Liouville structures by construction, it is not obvious from the formula for $\omega_t$. 
\end{example}
\section{Concluding remarks} \label{sec:concluding-remarks}


\subsection{Na\"ive symplectic stability on \texorpdfstring{$\mathbb{R}^{2n}$}{R2n}}
For $\mathbb{R}^{2n}$ it is possible to get a
naive symplectic stability result with a completely elementary
proof as follows.
Let $\omega_t$, $t \in [0, 1]$, be \tenoreleven{a symplectic isotopy}{an isotopy of symplectic forms} on $\mathbb{R}^{2n}$ with $\int_0^1 \sup_{x \in \mathbb{R},\; s \in [0, 1]} s\abs{x} \abs{\omega_t^{-1}(x)}_\Euclid \abs{\dot{\omega}_t(sx)}_\Euclid \der t$ finite.
Then $\omega_t$ is a strong isotopy.
To verify this let $E$ be the Euler vector field on $\mathbb{R}^{2n}$ and $I \colon \Omega^2(\mathbb{R}^{2n}) \to \Omega^1(\mathbb{R}^{2n})$ be given by $I \omega(x) = \int_0^1 E(sx) \intprod \omega(sx)\, \der s$. 
Then $\der I \omega = \omega$ for any exact $2$-form $\omega$.
Let $\sigma_t = I\dot{\omega}_t$ and let $X_t = -\omega_t^{-1} \sigma_t$.
Let $x \in \mathbb{R}^{2n}$ and let $\gamma$ be the maximal flow line of $X_t$ with $\gamma(0) = x$.
If the maximal domain of $\gamma$ is $[0, T)$ with $0 < T \leq 1$, then by \cref{lem:escape-lemma} the image of $\gamma$ must not be contained in any compact set.
But the length of $\gamma$ is bounded by $\int_0^T \sup_{x \in \mathbb{R},\; s \in [0, 1]} s\abs{x} \abs{\omega_t^{-1}(x)}_\Euclid \abs{\dot{\omega}_t(sx)}_\Euclid \der t <\infty$ so $\gamma([0, T))$ is precompact.
So the flow $\varphi_t$ of $X_t$ starting from $t_0 = 0$ exists for all $t \in [0, 1]$.

\subsection{Symplectic stability for compactly supported isotopies}
Using Lemma \ref{lem:primitive-form-compact-support} one can generalize Moser's stability theorem to apply to compactly supported isotopies: 
Let $\omega_t$, $t \in [0, 1]$, be an isotopy of symplectic forms on a manifold $M$ such that $\support(\omega_t - \omega_0) \subset W$ for all $t$, where $W \subset M$  is an open submanifold with compact closure and smooth boundary, and the cohomology class of $\Res{(\omega_t - \omega_0)}_W$ in $\Hml_\cspt^2(W,\mathbb{R})$ is trivial for all $t$. Then for any smoothly bounded precompact open submanifold $V$ of $M$ with $\overline{W} \subset V$ there exists a smooth path of diffeomorphisms of $M$ fixing $M\setminus V$ such that $\varphi_t^* \omega_t = \omega_0$ for all $t$.
Indeed, let $I^2_{M, V}$ be as in \cref{lem:primitive-form-compact-support}.
Let $\sigma_t = I^2_{M, V} \dot{\omega}_t$, then $\der \sigma_t = \dot{\omega}_t$.
Then $X_t = -\omega_t^{-1} \sigma_t$ is compactly supported in $W$ and therefore complete; the flow of $X_t$ fixes points in $M \setminus V$.
By the Path Method the flow $\varphi_t$ of $X_t$ satisfies $\varphi_t^* \omega_t = \omega_0$ for all $t$.
In fact, the result holds for $W$ any precompact open set, cf. \cite[Theorem~6.8]{CiEl2012} or \cite[Lemma, page 617]{MR728456} for alternative approaches (we chose to keep with the Hodge theoretic approach in establishing \cref{lem:primitive-form-compact-support}). 

This result was used in the proof of the stability result \cite[Proposition 11.8]{CiEl2012} for ``Liouville homotopies'' of Liouville manifolds, where it plays a role analogous to our use of \cref{lem:primitive-form-compact-support} on the gluing regions: By assuming the existence of smoothly varying families of compact hypersurfaces transverse to the (radial) Liouville vector field Cieliebak and Eliashberg are able to construct the required $1$-parameter family of diffeomorphisms on certain primary regions by applying Gray's theorem \cite{Gray59} to these hypersurfaces and then using the local product structure coming from the Liouville vector field; the resulting $1$-parameter family of diffeomorphisms can be fixed up on the remaining gluing regions by using the above generalization of Moser's theorem. Without the convexity assumptions on the symplectic forms, however, and the compatible ``Liouville homotopy'' giving the smooth families of contact hypersurfaces on which one can apply Gray's theorem, the generator $X_t$ for the strong symplectic isotopy one is trying to construct needs to be estimated to determine its integrability.

\subsection{Punctured compact manifolds}
Considering punctured compact manifolds allows for a comparison of sorts between our result and the original result of Moser. 
\cref{cor:symplecic-compact-punctured} states that a symplectic isotopy on a punctured compact manifold $M\setminus F$ such that $\omega_t^{-1}$ and $\dot{\omega}_t$ are uniformly bounded with respect to a metric defined on $M$ is a strong isotopy, provided $\mathrm{dim}M\geq 4$. 
Slightly modifying Moser's proof in the compact case one has a direct elementary proof of the weaker result:
Let $M$ be a compact manifold and let $F$ be a finite set of points on $M$.
If $\omega_t$, $t\in [0,1]$, is a symplectic isotopy on $M \setminus F$ which is the restriction of a symplectic isotopy on $M$, then $\omega_t$ is a strong isotopy on $M \setminus F$.
To demonstrate this let $\omega_t$ also denote the symplectic isotopy on $M$ whose restriction is the isotopy $\omega_t$ on $M\setminus F$. 
Construct $X_t$ on $M$ as in the usual proof of Moser's theorem. 
Since $F$ is finite, for each $t$ one can choose a Hamiltonian vector field $Y_t$ (Hamiltonian with respect to $\omega_t$) for which $Y_t|_F = - X_t|_F$. Since $X_t$ is smooth in $t$, $Y_t$ can be chosen smooth in $t$. By the usual argument the flow $\varphi_t$, $t\in [0,1]$, generated by $X_t+Y_t$ satisfies $\varphi_0=\mathrm{id}$ and $\varphi_t^*\omega_t=\omega_0$. Moreover, by construction $\varphi_t$ preserves $F$. So $\varphi_t|_{M\setminus F}$ is the required strong isotopy.

\subsection{Contact stability}
The previous ideas apply trivially to contact manifolds.
Let $(M, g)$ be a complete oriented odd dimensional Riemannian manifold.
Let $\theta_t$, $t \in [0, 1]$, be a smooth path of contact forms on $M$ with $\int_0^1 \sup_M \big|(\der\theta_t|_{H_t})^{-1}\dot{\theta}_t|_{H_t}\big|_g \der t <\infty$ where $H_t = \ker \theta_t$.
Then there exists a smooth path $\varphi_t$ of diffeomorphisms of $M$ and $f_t$ of positive smooth functions on $M$ such that $\varphi_0 = \identity$ and $\varphi_t^* \theta_t = f_t\theta_0$ for $t \in [0, 1]$. 
Indeed, this case is easy because one does not need to invert the exterior derivative to construct the time-dependent vector field (using the `path method' of Gray \cite{Gray59}).
Let $H_t=\ker \theta_t$, and let $H= H_0$. Let $X_t$ be the time dependent vector field $-(\der\theta_t|_{H_t})^{-1}(\dot{\theta}_t|_{H_t})$. 
Let $x \in M$ and let $\gamma$ be the maximal flow line of $X_t$ with $\gamma(0) = x$.
If the maximal domain of $\gamma$ is $[0, T)$ with $0 < T \leq 1$, then by \cref{lem:escape-lemma} the image of $\gamma$ must not be contained in any compact set. 
However, the length of $\gamma$ is bounded by $\int_0^1 \sup_M \abs{(\der\theta_t|_{H_t})^{-1}\dot{\theta}_t|_{H_t}}_g \der t$ and therefore $\gamma([0, T))$ is precompact. 
So the flow $\varphi_t$ of $X_t$ starting from $t_0 = 0$ exists for all $t \in [0, 1]$. 
Let $R_t$ denote the Reeb vector field of $\theta_t$ and let $h_t = \dot{\theta}_t(R_t)$. 
We compute, using Cartan's formula and $\theta_t(X_t) = 0$,
\begin{equation*}
  \frac{\der}{\der t}(\varphi_t^* \theta_t) = \varphi_t^*(\mathcal{L}_{X_t} \theta_t + \dot{\theta_t}) = \varphi_t^*(-\dot{\theta_t}|_{H_t} + \dot{\theta_t}) = \varphi_t^*(\dot{\theta_t}(R_t)\theta_t) = h_t \varphi_t^* \theta_t.
\end{equation*}
Since $\varphi_0^* \theta_0 = \theta_0$ there exists $f_t$ such that $\varphi_t^* \theta_t = f_t\theta_0$ for all $t \in [0, 1]$.

\bibliographystyle{abbrv}
\bibliography{ref}

\authaddresses

\end{document}